\numberwithin{equation}{section}
\numberwithin{figure}{section}
\theoremstyle{plain}
\newtheorem{thm}{\protect\theoremname}[section]
\newtheorem{conjecture}[thm]{Conjecture}
  \theoremstyle{definition}
  \theoremstyle{remark}
  \newtheorem{remarks}[thm]{\protect\remarknames}
  \theoremstyle{plain}
  \newtheorem{lem}[thm]{\protect\lemmaname}
  \theoremstyle{plain}
  \newtheorem{prop}[thm]{\protect\propositionname}
\newcommand{\Q}{\mathbb{Q}}
\newcommand{\R}{\mathbb{R}}
\newcommand{\N}{\mathbb{N}}
\newcommand{\C}{\mathbb{C}}
\newcommand{\Z}{\mathbb{Z}}
\newcommand{\SL}{\operatorname{SL}}
\newcommand{\GL}{\operatorname{GL}}
\renewcommand{\H}{\mathbb{H}}
\newcommand{\Tr}{\operatorname{Tr}}
\newcommand{\Nr}{\operatorname{Nr}}
\newcommand{\gen}{\operatorname{gen}}
\newcommand{\spn}{\operatorname{spn}}
  \providecommand{\definitionname}{Definition}
  \providecommand{\lemmaname}{Lemma}
  \providecommand{\propositionname}{Proposition}
  \providecommand{\remarkname}{Remark}
  \providecommand{\remarknames}{Remarks}
\providecommand{\theoremname}{Theorem}
\begin{document}
\begin{CJK}{UTF8}{}%

\title{On sign changes of cusp forms and the halting of an algorithm to construct a supersingular elliptic curve with a given endomorphism ring}
\end{CJK}
\author{King Cheong Fung}
\address{Mathematics Department, University of Hong Kong, Pokfulam, Hong Kong}
\email{mrkcfung@hku.hk}
\author{Ben Kane}
\address{Mathematics Department, University of Hong Kong, Pokfulam, Hong Kong}
\email{bkane@maths.hku.hk}
\date{\today}

\begin{abstract}

Chevyrev and Galbraith recently devised an algorithm which inputs a maximal order of the quaternion algebra ramified at one prime and infinity and constructs a supersingular elliptic curve whose endomorphism ring is precisely this maximal order.  They proved that their algorithm is correct whenever it halts, but did not show that it always terminates.  They did however prove that the algorithm halts under a reasonable assumption which they conjectured to be true.  It is the purpose of this paper to verify their conjecture and in turn prove that their algorithm always halts.  

More precisely, Chevyrev and Galbraith investigated the theta series associated with the norm maps from primitive elements of two maximal orders.  They conjectured that if one of these theta series ``dominated'' the other in the sense that the $n$th (Fourier) coefficient of one was always larger than or equal to the $n$th coefficient of the other, then the maximal orders are actually the same.  We prove that this is the case.
\end{abstract}
\subjclass[2010]{11E20, 11E45, 11F37, 11G05, 16H05, 68W40}
\keywords{sign changes of cusp forms, supersingular elliptic curves, quaternion algebras, theta series, ternary quadratic forms, halting of algorithms}
\thanks{  The research of the second author was supported by grant project numbers 27300314 and 17302515 of the Research Grants Council.}

\maketitle

\section{Introduction}

In this paper, we investigate the construction of certain elliptic curves defined over finite fields.  For a prime $p$, let $E$ be an elliptic curve over $\mathbb{F}_{p^{2}}$.  Deuring \cite{Deuring} showed that the endomorphism ring of $E$ is either an order in an imaginary quadratic field (the \textit{ordinary} case) or an order in the quaternion algebra $B_p$ (see Section \ref{sec:quaternion}) which is ramified at $p$ and infinity (the \textit{supersingular} case).  The supersingular case is the primary interest of this paper.  To motivate one area of study related to such curves, we momentarily consider elliptic curves over a number field, in which case the endomorphism ring is either isomorphic to $\Z$ or it is isomorphic to an order in an imaginary quadratic field (the \textit{Complex Multiplication} or \textit{CM} case).  In the second case, we say that the elliptic curve has (exact) CM by this order.  Next recall that the orders of an imaginary quadratic field are entirely determined by their discriminants; that is to say, for each discriminant $d<0$, there is a unique order $\mathcal{O}_d$ of discriminant $d$ in the ring of integers $\mathcal{O}_{\Q(\sqrt{d})}$ of $\Q(\sqrt{d})$.  When $p$ is a prime of good reduction, there is a natural reduction map from elliptic curves over the Hilbert class field of $\Q(\sqrt{d})$ (a certain number field) to elliptic curves over $\mathbb{F}_{p^2}$.  Moreover, when $p$ is inert or ramified in $\Q(\sqrt{d})$, this map sends CM elliptic curves to supersingular elliptic curves.  An interesting question arises from this connection.  Namely, for which $d$ is the reduction map from the set of elliptic curves with CM by $\mathcal{O}_d$ to supersingular elliptic curves surjective?  This question was studied by a number of authors (cf. \cite{E-O} and \cite{J-Kane}).  It turns out that the reduction map is not always surjective and is not in general one-to-one.  Different authors have also approached the question in different directions and from slightly different perspectives.  Elkies, Ono and Yang \cite{E-O} worked on the question when the discriminant $d$ was restricted to be fundamental.  In other words, they considered those elliptic curves with exact CM by the ring of integers $\mathcal{O}_{\Q(\sqrt{d})}$ of an imaginary quadratic field and varied the field.  They proved that for $d$ sufficiently large, the image of the reduction map is surjective and furthermore that it is equidistributed across all supersingular elliptic curves.  A slight modification of this was investigated by Jetchev and the second author \cite{J-Kane}, where it was shown that the reduction from curves with exact CM by $\mathcal{O}_d$ is surjective for $d$ sufficiently large but not necessarily fundamental (albeit with some minor restriction on the choice of $d$).  The approach taken in \cite{E-O} and \cite{J-Kane} was to use a  correspondence between elliptic curves with CM by $\mathcal{O}_d$ which reduce to a supersingular elliptic curve and \textit{optimal embeddings} of $\mathcal{O}_d$ in its endomorphism ring; roughly speaking, if $\mathcal{O}_{d}$ embeds into the quaternionic order, then $\mathcal{O}_{dr^2}$ also embeds by multiplying by $r$, and optimal embeddings are those which do not come from smaller discriminants.  These optimal embeddings, in turn, correspond to primitive representations of $d$ by the norm map on trace zero elements in the quaternionic order.  

Having given one area of study centered around supersingular elliptic curves, we return to the study of supersingular elliptic curves themselves.  Chevyrev and Galbraith \cite{C-G} constructed an algorithm to compute a supersingular elliptic curve with a given endomorphism ring (a maximal order in the quaternion algebra).  Their construction involved \textit{successive minima} (the smallest, second smallest, etc. positive integers that  are \textit{primitively} represented) of the quadratic form corresponding to the reduced norm map on the maximal order. They showed that their algorithm gives the correct answer whenever it terminates, but they did not show that the algorithm indeed halts.  Although they did not show that it halts, they were able to prove that the algorithm would halt unless there exist a pair of maximal orders satisfying a peculiar relation between their norm maps.  Roughly speaking, their algorithm halts unless there are two different maximal orders for which the first one contains more optimal embeddings of $\mathcal{O}_d$ than the second one for all $d$.  For such a pair of maximal orders, Chevyrev and Galbraith said that the first order ``dominates'' the second order.  They then conjectured that no such pair exists (see Conjecture \ref{conj:main} for a precise statement and \eqref{eqn:aOT} for the definition of the relevant quantities).
\begin{conjecture}[Chevyrev--Galbraith]\label{conj:intro}
Suppose that $\mathcal{O}$ and $\mathcal{O}'$ are maximal orders in the quaternion algebra $B_p$ ramified precisely at $p$ and $\infty$.  If $\mathcal{O}'$ ``dominates'' $\mathcal{O}$ in the sense that \eqref{eqn:optdom} holds for all $n\in\N$, then $\mathcal{O}$ and $\mathcal{O}'$ are isomorphic.  
\end{conjecture}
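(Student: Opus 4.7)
The plan is to translate the domination hypothesis at the level of primitive representation numbers into a non-negativity statement for the Fourier coefficients of a weight $3/2$ cusp form, then exploit sign-change theorems for half-integral weight cusp forms to force that cusp form to vanish, and finally extract the isomorphism $\mathcal{O}\cong\mathcal{O}'$ from the resulting equality of ternary theta series.

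First I would remove the word ``primitive'' from the hypothesis. Writing $r_{\mathcal{O}}(n)$ and $r_{\mathcal{O}}^{*}(n)$ for, respectively, the total and primitive representation counts of $n$ by the reduced norm restricted to the rank-three lattice of trace-zero elements of $\mathcal{O}$, M\"obius inversion gives
\begin{equation*}
r_{\mathcal{O}}(n)=\sum_{d^{2}\mid n}r_{\mathcal{O}}^{*}(n/d^{2}),
\end{equation*}
so the assumed inequality $r_{\mathcal{O}'}^{*}(n)\ge r_{\mathcal{O}}^{*}(n)$ for every $n$ immediately yields the same inequality for \emph{all} representation numbers. By Shimura's theory of theta functions for positive definite ternary quadratic forms, the generating function $\theta_{\mathcal{O}}(\tau):=\sum_{n\ge 0}r_{\mathcal{O}}(n)q^{n}$ is a modular form of weight $3/2$ on a suitable $\Gamma_{0}(N)$. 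Since any two maximal orders of $B_{p}$ are locally conjugate at every finite place, their trace-zero lattices lie in a single genus, and the Siegel--Minkowski mass formula identifies the Eisenstein component of $\theta_{\mathcal{O}}$ with the genus theta series, which does not depend on $\mathcal{O}$. Hence
\begin{equation*}
f:=\theta_{\mathcal{O}'}-\theta_{\mathcal{O}}
\end{equation*}
is a cusp form of weight $3/2$ whose Fourier coefficients are non-negative.

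The central step is then to show that a nonzero cusp form of weight $3/2$ cannot have only non-negative Fourier coefficients. I would decompose $f$ via the Shimura--Waldspurger correspondence, together with the Serre--Stark classification, into a unary theta (``CM'') contribution supported on square classes and a piece orthogonal to the span of unary theta series. For the orthogonal piece one invokes a sign-change theorem in the spirit of Bruinier--Kohnen to produce infinitely many sign changes of the coefficients indexed by squarefree $n$ in appropriate arithmetic progressions, contradicting non-negativity; the unary theta piece is then handled directly using the explicit Hecke-character formulas for its coefficients. This combined analysis forces $f\equiv 0$, hence $\theta_{\mathcal{O}}=\theta_{\mathcal{O}'}$.

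Having obtained equality of the trace-zero ternary theta series, I would then invoke the rigidity specific to the family of lattices arising from maximal orders in $B_{p}$, for instance via Eichler's Brandt-matrix description of the type-class set, to conclude $\mathcal{O}\cong\mathcal{O}'$. The principal obstacle is without question the sign-change step: weight $3/2$ is precisely the weight at which unary theta series appear and block a uniform appeal to existing sign-change results, so the CM component has to be isolated and analysed separately; moreover, the domination hypothesis gives non-negativity at \emph{all} $n$, whereas Bruinier--Kohnen-type theorems live most naturally over squarefree indices in prescribed progressions, so non-trivial work is required to transfer information between these different sub-levels.
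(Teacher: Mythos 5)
Your overall strategy is the one the paper follows: pass from primitive to total representation numbers, observe that the two Gross lattices lie in one genus so the difference of theta series is a weight $3/2$ cusp form with non-negative coefficients, kill it with sign-change theorems, and then recover the isomorphism of orders from equality of ternary theta series. Two of your steps, however, have genuine gaps.

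First, the final rigidity step. You propose to deduce $\mathcal{O}\cong\mathcal{O}'$ from $\vartheta_{\mathcal{O}^T}=\vartheta_{\mathcal{O}'^T}$ ``via Eichler's Brandt-matrix description of the type-class set,'' but Brandt matrices govern the quaternary norm forms on ideals and do not by themselves tell you that two maximal orders with identical \emph{ternary} trace-zero theta series are conjugate. The paper needs two nontrivial external inputs here: Schiemann's theorem that a positive definite ternary quadratic form is determined up to $\GL_3(\Z)$-equivalence by its theta series, and the Gross--Lucianovic bijection between $\GL_3(\Z)$-classes of ternary forms and isomorphism classes of quaternion orders (plus the lemma of Chevyrev--Galbraith relating conjugacy of $\mathcal{O}^T$ to conjugacy of $\mathcal{O}$). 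Without a substitute for Schiemann's theorem your argument does not close.

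Second, the sign-change step as you describe it does not quite work. You would decompose $f=f_0+f_1$ with $f_0$ unary and $f_1\in U_{3/2}^{\perp}$ and apply a Bruinier--Kohnen type theorem to $f_1$, but the non-negativity hypothesis is only available for the \emph{sum} $f$, not for $f_1$ alone, so ``sign changes of $f_1$ contradict non-negativity'' is not a valid inference as stated. The paper avoids this by first proving $f_0=0$ outright: the difference lies in Kohnen's plus space $S_{3/2}^{+}(p)$, which is Hecke-isomorphic to $S_2(p)$, whereas unary theta functions have the Hecke eigenvalues of weight $2$ Eisenstein series, so the plus space is orthogonal to all unary thetas. (Your alternative of comparing the unary coefficients, which grow like $m$, against Duke's $O(n^{13/28+\epsilon})$ bound for the orthogonal part can be made to work, and is sketched in the paper's remarks, but it must be carried out for $f$ itself, not for the pieces separately.) Finally, the issue you correctly flag --- that the Kohnen--Lau--Wu theorem only gives sign changes along $\{tn^2\}$ for squarefree $t$ with $a_f(t)\neq 0$, so one only concludes vanishing at squarefree indices --- is a real obstacle that you leave unresolved; the paper resolves it by applying the operators $U_{\ell^2}$ repeatedly, after proving that $U_{\ell^2}$ preserves orthogonality to unary theta functions, and then rerunning the sign-change argument on $f|U_{m_0^2}$ to kill the coefficients at $tm_0^2$ for every $m_0$.
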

The goal of this paper is to prove Conjecture \ref{conj:intro}, and in turn prove the halting of the algorithm of Chevyrev and Galbraith.
\begin{thm}\label{thm:main}
Conjecture \ref{conj:intro} is true.  Furthermore, the algorithm of Chevyrev and Galbraith halts.
\end{thm}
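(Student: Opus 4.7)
The plan is to translate the dominance condition \eqref{eqn:optdom} into a statement about the weight $3/2$ theta series attached to the reduced norm form on the trace-zero sublattices $\mathcal{O}^{0} := \{\alpha \in \mathcal{O} : \Tr(\alpha) = 0\}$ and $(\mathcal{O}')^{0}$. As the introduction indicates, the number of optimal embeddings of $\mathcal{O}_{d}$ into $\mathcal{O}$ agrees, up to a uniform normalization depending only on unit groups, with the number of primitive representations of a fixed rational multiple of $|d|$ by $\Nr|_{\mathcal{O}^{0}}$. Since total representation numbers recover from the primitive ones via $r(n) = \sum_{d^{2}\mid n} r^{*}(n/d^{2})$, coefficient-wise dominance for the primitive counts automatically implies the same inequality for all representations, reducing Conjecture~\ref{conj:intro} to the claim that whenever
\[
\theta_{\mathcal{O}}(z) := \sum_{\alpha \in \mathcal{O}^{0}} q^{\Nr(\alpha)}, \qquad \theta_{\mathcal{O}'}(z) := \sum_{\alpha \in (\mathcal{O}')^{0}} q^{\Nr(\alpha)}
\]
satisfy $\theta_{\mathcal{O}'} \geq \theta_{\mathcal{O}}$ coefficient by coefficient, one has $\mathcal{O} \cong \mathcal{O}'$.

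All maximal orders of $B_{p}$ are locally conjugate at every place, so $\mathcal{O}^{0}$ and $(\mathcal{O}')^{0}$ lie in a single genus of ternary quadratic forms. Their theta series therefore share the same projection to the Eisenstein subspace of the relevant space of half-integral weight modular forms, so the difference
\[
f := \theta_{\mathcal{O}'} - \theta_{\mathcal{O}}
\]
is a cusp form of weight $3/2$, and the reduced hypothesis above becomes the statement that every Fourier coefficient of $f$ is non-negative.

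The principal obstacle is then to prove that a weight $3/2$ cusp form with non-negative Fourier coefficients in the relevant space must vanish identically. Outside half-integral weight this is relatively soft, following from the fact that the cuspidal coefficients grow strictly more slowly than the Eisenstein lower bound on representation numbers. In weight $3/2$ one must contend with the Shimura correspondence and with possible unary theta contributions. I would decompose $f$ along a Hecke eigenbasis of the appropriate Kohnen-style plus space: for each genuine (non-theta) eigenform, Waldspurger's formula identifies $|a(n)|^{2}$, for $n$ in a fixed square class, with a non-vanishing arithmetic factor times the central value of an $L$-function of a quadratic twist, and sign-change results in the spirit of Bruinier--Kohnen and Kohnen--Lau--Wu then force infinitely many sign changes in each such subsequence unless the eigenform contribution is trivial. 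The remaining unary theta components are supported on single square classes and are treated directly on the level of Fourier expansions. Combining these contributions yields $f \equiv 0$.

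Once $\theta_{\mathcal{O}} = \theta_{\mathcal{O}'}$, a theorem of Schiemann guarantees that the two ternary forms $\mathcal{O}^{0}$ and $(\mathcal{O}')^{0}$ are $\Z$-isometric, and the classical dictionary between $\Z$-isometry classes of trace-zero forms and isomorphism classes of maximal orders in the definite quaternion algebra $B_{p}$ upgrades this to $\mathcal{O} \cong \mathcal{O}'$. This verifies Conjecture~\ref{conj:intro}, and the halting assertion in Theorem~\ref{thm:main} is then immediate from the analysis of Chevyrev and Galbraith, who had already reduced non-termination of their algorithm to the existence of a strictly dominating pair of non-isomorphic maximal orders.
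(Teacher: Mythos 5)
Your overall architecture matches the paper's: reduce to non-negativity of the coefficients of a weight $3/2$ cuspidal theta difference, force that difference to vanish via sign-change results, then finish with Schiemann's theorem and the Gross--Lucianovic dictionary, quoting Chevyrev--Galbraith for the halting statement. But the central analytic step, as you describe it, has two concrete gaps. First, the sign-change result you want (Theorem \ref{thm: Kohnen-Lau-Wu}) only produces sign changes along a square class $\{a_g(tn^2)\}_n$ when $t$ is \emph{squarefree} with $a_g(t)\neq 0$, and only for $g$ orthogonal to unary theta series. Your plan of decomposing into Hecke eigenforms and deducing ``sign changes in each such subsequence unless the eigenform contribution is trivial'' does not work as stated: the non-negativity hypothesis holds for the full difference $f$, not for its individual eigencomponents, so no contradiction is available eigenform by eigenform. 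Even working with the projection of $f$ onto $U_{3/2}^{\perp}$ as a whole, you must handle the possibility that every squarefree-indexed coefficient vanishes while some $a(tm_0^2)$ with $m_0>1$ does not. The paper closes this by applying $U_{m_0^2}=\prod_j U_{\ell_j^2}$ and proving (Lemma \ref{lem:unaryU}) that $U_{\ell^2}$ preserves orthogonality to unary theta functions --- a step with its own nontrivial proof via Duke's bound --- so that Theorem \ref{thm: Kohnen-Lau-Wu} can then be applied to $g|U_{m_0^2}$ at the squarefree index $t$. Nothing in your sketch supplies this.

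Second, ``the remaining unary theta components \dots are treated directly on the level of Fourier expansions'' is not yet an argument. Either you prove a priori that the difference is orthogonal to unary theta functions --- the paper does this in Lemma \ref{lem:OTdiff}, using that $S_{3/2}^{+}(p)$ is Hecke-isomorphic to $S_{2}(p)$, so its eigenvalues cannot coincide with the Eisenstein-type eigenvalues of unary thetas --- or you must separate the unary part from the orthogonal part quantitatively, which requires Duke's bound $a(n)\ll n^{13/28+\epsilon}$ on $U_{3/2}^{\perp}$ so that the unary contribution (of size $\sqrt{n}$ on its square class) dominates and its sign behavior is visible in $f$; this is the alternative route sketched in Remark \ref{rem:signchange}. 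A smaller issue: the Chevyrev--Galbraith conjecture, Gross's optimal-embedding count, and the Gross--Lucianovic bijection all concern the Gross lattice $\mathcal{O}^T=(2\mathcal{O}+\Z)\cap B_p^0$, not the trace-zero sublattice $\mathcal{O}\cap B_p^0$ that you work with; the argument should be run on $\mathcal{O}^T$.
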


The peculiar relation mentioned above involves the theta series of maximal orders generated by their norm maps on their trace zero elements, which are in fact ternary quadratic forms. Therefore, in order to solve our problem, some properties and facts about ternary quadratic forms and their theta series are required. As reviewed in Section \ref{sec:prelim}, by the general theory of modular forms we know that the theta series are modular forms of weight $3/2$.  Conjecture \ref{conj:intro} essentially states that if the $n$th (Fourier) coefficient of the theta series associated with one maximal order is always greater than the $n$th coefficient of the theta function associated to another maximal order, then the theta functions are the same.  Our strategy to attack the problem is to take the difference of the corresponding theta series.  Using the \textit{mass formula}, which was introduced by Siegel \cite{Siegel} and later was extended by Schulze-Pillot \cite{Schulze-Pillot}, one can show that the difference of these theta series is a cusp form and that this cusp form is orthogonal to certain functions known as unary theta functions (see Lemma \ref{lem:OTdiff}).  The central idea is to use the fact that coefficients of such forms must either vanish identically or change sign infinitely often.  These sign changes were investigated by Bruinier and Kohnen \cite{B-K} and later by Kohnen, Lau and Wu \cite{K-L-W}.

The paper is organized as follows.  In Section \ref{sec:prelim} we introduce some of the necessary background and notation for quaternion algebras and modular forms, in Section \ref{sec:defs} we give a precise statement of Chevyrev and Galbraith's conjecture, and in Section \ref{sec:proofs} we prove the their conjecture.

\section{Preliminaries}\label{sec:prelim}

In this section, we introduce some notation and give the main necessary definitions.  
\subsection{Quaternion algebras}\label{sec:quaternion}
A \begin{it}quaternion algebra\end{it} $B$ over $\Q$ is a non-commutative rank 4 algebra with the following properties (see \cite[Chapter 1]{Vigneras} for further information).  
\noindent

\noindent
\begin{enumerate}[leftmargin=*]
\item As a vector space over $\Q$, there are four generators, $1$, $\alpha$, $\beta$, and $\alpha\beta$.  
\item There exist $r,s\in\Q$ such that $\alpha^2=r$ and $\beta^2=s$.  
\item We have $\alpha\beta=-\beta\alpha$.  
\item There is an involution, known as the \begin{it}standard involution\end{it} defined for $a,b,c,d\in\Q$ by 
\[
\overline{ a +b\alpha + c\beta+d\alpha\beta} = a-b\alpha-c\beta-d\alpha\beta.
\]
\end{enumerate}
The \begin{it}reduced trace\end{it} of an element $h:=a+b\alpha+c\beta+d\alpha\beta\in B$ is 
\[
\Tr(h):=h+\overline{h} = 2a.
\]
The trace zero elements we denote by 
\[
B^0:=\left\{ h\in B: \Tr(h)=0\right\}.
\]
The \begin{it}reduced norm\end{it} of $h$ is 
\[
\Nr(h):=h\overline{h} = a^2-r b^2 -sc^2+rs d^2.
\]
The norm $\Nr$ is a quadratic form (i.e., a homogeneous degree 2 polynomial) in $4$ variables over $\Q$.  We call the quaternion algebra \begin{it}definite\end{it} if the norm map is positive-definite.  If $B$ is definite, then it is also a division algebra.  For $h\in B\setminus\Q$, the \begin{it} reduced characteristic polynomial\end{it} for $h$ is
\[
x^2-\Tr(h) x +\Nr(h).  
\]
This is the minimal polynomial of $h$ over $\Q$.  If the coefficients are furthermore in $\Z$, then we call $h$ an \begin{it}integral\end{it} element.  

An \begin{it}order\end{it} of $B$ is a rank 4 lattice (over $\Z$) of $B$ which is also a subring of $B$.  An order is called \begin{it}maximal\end{it} if it is not a proper suborder of another order of $B$.  Unlike orders in the ring of integers of a quadratic field, there may be more than one maximal order; for example, given a maximal order $\mathcal{O}$ and $h\in B$, since $B$ is non-commutative one may obtain a distinct order by conjugation.  If two maximal orders $\mathcal{O}$ and $\mathcal{O}'$ are conjugate (i.e., there exists $c\in B_p$ for which $\mathcal{O}'=c^{-1}\mathcal{O}c$ or equivalently the orders are isomorphic), then one says that they have the same \begin{it}type\end{it} and write $\mathcal{O}\sim\mathcal{O}^{\prime}$.  Note further that the elements $h$ of an order $\mathcal{O}$ are necessarily integral because for $h\in\mathcal{O}$, the sublattice $\Z[x]$ is a submodule.  

Taking the tensor product $B\otimes_{\Q}K$ with a local field $K=\R$ or $K=\Q_p$, one obtains either the ring of $2\times 2$ matrices $M(2,K)$ or a definite quaternion algebra.  The definite quaternion algebra over $K$ is unique up to isomorphism (cf. \cite[p. 31]{Vigneras}).  We say that $B$ is \begin{it}ramified\end{it} at a prime $p$ (resp. ramified at $\infty$) if $B\otimes_{\Q} \Q_p$ (resp. $B\otimes_{\Q}\R$) is definite and we say that $B$ is \begin{it}split\end{it} (or unramified) at $p$ (resp. $\infty$) otherwise.  In this paper, we are particularly interested in the quaternion algeba $B_p$ ramified precisely at $p$ and $i\infty$.  As noted above, the reduced norm on $B_p$ is a \begin{it}quaternary\end{it} ($4$-variable) quadratic form.  For a maximal order $\mathcal{O}$, the reduced norm restricted to $\mathcal{O}\cap B_p^0$ is an integral \begin{it}ternary\end{it} ($3$-variable) quadratic form.  Slightly modifying this, we define the so-called ``Gross lattice'' \cite[(12.8)]{Gross} to be 
\[
\mathcal{O}^T:=\left(2\mathcal{O}+\Z\right)\bigcap B_p^0=\left\{ 2x-\Tr(x): x\in \mathcal{O}\right\}.
\]
By \cite[Proposition 12.9]{Gross}, elements of $\mathcal{O}^T$ with norm $d$ are essentially in one-to-one correspondence with embeddings of the quadratic order $\mathcal{O}_d$ into $\mathcal{O}$.  More precisely, denote the generators of $\mathcal{O}^T$ over $\Z$ by $u_1,u_2,u_3$ and let
\begin{align}\label{eqn:aOT}
a_{\mathcal{O}^{T}}(d)&:=\#\left\{h=h_1 u_1+ h_2u_2 + h_3u_3\in \mathcal{O}^T:\Nr(x)=d,\ \mathfrak{g}(h) =1\right\}, \text{ with}\\
\label{eqn:gcd}\mathfrak{g}(h)&:=\gcd\!\left(h_{1},h_{2},h_{3}\right),
\end{align}
be the number of primitive representations of $d$ for the reduced norm $\mathrm{Nr}$ on $\mathcal{O}^{T}$.  Then 
\begin{equation}\label{eqn:primreps}
a_{\mathcal{O}^{T}}(d)= \frac{h_{\mathcal{O}}(d)}{u(d)},
\end{equation}
where $h_{\mathcal{O}}(d)$ denotes the number of optimal embeddings of $\mathcal{O}_d$ into $\mathcal{O}$ and $u(d)$ denotes the number of units in $\mathcal{O}_d$.

\subsection{Quadratic forms and theta functions}

As noted above, a quadratic form $Q$ is a homogeneous polynomial in $n$ variables of degree 2.  We may associate $Q$ with its (symmetric) Gram matrix $A$, in which case the quadratic form for $X\in Q^n$ may be written
\[
Q(X)=\frac{1}{2} X^{T} A X.
\]
We say that $Q$ is \begin{it}integral\end{it} if all of the entries of $A$ are in $\Z$ and we call $Q$ \begin{it}integer-valued\end{it} if $Q(X)\in\Z$ for all $X\in\Z^n$; to see the difference, consider $Q(X,Y)=X^2+XY+Y^2$.  We call $Q$ \begin{it}positive-definite\end{it} (resp. negative-definite) if $Q(X)\geq 0$ (resp. $Q(X)\leq 0$) for all $X\in \Q^n$ and $Q(X)=0$ if and only if $m=0$.  In this paper, we are mostly interested in positive-definite integral ternary quadratic forms.  For further information about ternary quadratic forms, a good survey may be found in \cite{Hanke}.

We split the quadratic forms into \begin{it}classes\end{it}, sets of quadratic forms which are equivalent under the action of $\GL_3(\Z)$.  Two forms $Q$ and $\mathcal{Q}$ in the same class are referred to as \begin{it}globally-equivalent\end{it} and we simply write $Q\sim_Z \mathcal{Q}$ for this relation.  Classes are then grouped together based on their local conditions.  For a positive-definite integral quadratic form ($a_{ij}\in\Z$)
\[
Q(X) = \sum_{1\leq i\leq j\leq n } a_{ij} X_i X_j,
\]
since $\Z$ embeds into the $\ell$-adic integers $\Z_{\ell}$, it is natural to allow $X\in\Z_{\ell}$ and consider $Q$ as a quadratic form over $\Z_{\ell}$ (equivalently, we may tensor the Gram matrix with $\Z_{\ell}$ over $\Z$).  Considering $Q$ over all $\Z_{\ell}$ simultaneously leads to an adelic interpretation; we do not investigate this further here, but simply note that we obtain a quadratic form $Q_{\ell}$ for each prime $\ell$.  Two quadratic forms $Q$ and $\mathcal{Q}$ are \begin{it}locally-equivalent\end{it} at the prime $\ell$ if they are equal under the action of an element of $\GL_3(\Z_{\ell})$, and we denote this equivalence by $Q\sim_{\Z_{\ell}} \mathcal{Q}$.  The set of equivalence classes which are locally-equivalent at all primes we call the \begin{it}genus\end{it} of $Q$, and (a set of representatives for) the classes in the genus we denote by $\gen(Q)$.  For the ternary case, the genus is then further subdivided into sub-genera called \begin{it}spinor genera\end{it} formed by equivalence under the spin group; see \cite[Section 102, pp. 297--305]{OMeara} for a description of this equivalence.  We use $\spn(Q)$ to denote (a set of representatives for) the classes of the spinor genus of $Q$.

For a positive-definite integral $n$-ary quadratic form $Q$ and $m\in\N_0$, let $r_Q(m)$ denote the number of representations of $m$ by $Q$.  Denoting $q:=e^{2\pi i z}$, the \begin{it}theta series\end{it} associated with $Q$ is
\begin{equation}\label{eqn:ThetaQdef}
\Theta_Q(z):=\sum_{m\in\N_0} r_Q(m)q^m= \sum_{X\in \Z^n}q^{Q(X)}.
\end{equation}
Denoting the number of \begin{it}automorphs\end{it} of $Q$ (i.e., the size of the stabilizer of $Q$ in $\GL_3(\Z)$) by $\omega_Q$, we can also define theta series
\[
\Theta_{\gen(Q)}(z):=\frac{1}{\sum_{\mathcal{Q}\in \gen(Q)}\omega_{\mathcal{Q}}^{-1}}\sum_{\mathcal{Q}\in \gen(Q)} \frac{\Theta_\mathcal{Q}}{\omega_{\mathcal{Q}}}
\]
for the genus of $Q$ and 
\[
\Theta_{\spn(Q)}(z):=\frac{1}{\sum_{\mathcal{Q}\in \spn(Q)}\omega_{\mathcal{Q}}^{-1}}\sum_{\mathcal{Q}\in \spn(Q)} \frac{\Theta_\mathcal{Q}}{\omega_{\mathcal{Q}}}
\]
for the spinor genus of $Q$.  

The theta series $\Theta_Q$ are part of a more general family of theta series, where we may insert a polynomial $P(X)$ in front of $q^{Q(X)}$.  We only need these more general theta series in the case that $n=1$, in which case for a odd character $\psi:\Z/N\Z\to \C$ and $t\in\N$ we define the \begin{it}unary theta function\end{it}
\begin{equation}\label{eqn:unarydef}
h_{\psi,t}(z):=\sum_{m\geq 1}\psi(m)mq^{tm^2}.
\end{equation}

\subsection{Modular forms}
In this paper, we view the theta series associated with quadratic forms from the perspective of (classical holomorphic) modular forms, which we require a few preliminaries to define.  
\subsubsection{Basic definitions}
Let $\H$ denote the \begin{it}upper half-plane\end{it}, i.e., those $z=x+iy\in \C$ with $x\in\R$ and $y>0$.  The matrices $\gamma=\left(\begin{smallmatrix} a&b\\ c&d\end{smallmatrix}\right)\in\SL_2(\Z)$ (the space of two-by-two integral matrices with determinant $1$) act on $\H$ via \begin{it}fractional linear transformations\end{it} $\gamma z:=\frac{az+b}{cz+d}$.  For 
\[
j(\gamma,z):=cz+d,
\]
a \begin{it}multiplier system\end{it} for a subgroup $\Gamma\subseteq \SL_2(\Z)$ and \begin{it}weight\end{it} $r\in \R$ is a function $\nu:\Gamma\mapsto \C$ such that for all $\gamma,M\in\Gamma$ (cf. \cite[(2a.4)]{Pe1})
\[
\nu(M \gamma) j(M\gamma,z)^r = \nu(M)j(M,\gamma z)^r \nu(\gamma)j(\gamma,z)^r.
\]
The \begin{it}slash operator\end{it} $|_{r,\nu}$ of weight $r$ and multiplier system $\nu$ is then 
\[
f|_{r,\nu}\gamma (z):=\nu(\gamma)^{-1} j(\gamma,z)^{-r} f(\gamma z).
\]
A \begin{it}(holomorphic) modular form\end{it} of weight $r\in\R$ and multiplier system $\nu$ for $\Gamma$  is a function $f:\H\to\C$ satisfying the following criteria:
\noindent

\noindent
\begin{enumerate}[leftmargin=*]
\item
The function $f$ is holomorphic on $\H$.
\item
For every $\gamma\in\Gamma$, we have 
\begin{equation}\label{eqn:modularity}
f|_{r,\nu}\gamma= f.
\end{equation}
\item
The function $f$ is bounded towards every \begin{it}cusp\end{it} (i.e., those elements of $\Gamma\backslash(\Q\cup\{i\infty\})$).  This means that at each cusp $\varrho$ of $\Gamma\backslash \H$, the function $f_{\varrho}(z):=f|_{r,\nu}\gamma_{\varrho}(z)$ is bounded as $y\to \infty$, where $\gamma_{\varrho}\in \SL_2(\Z)$ sends $i\infty$ to $\varrho$.  
\end{enumerate}
Furthermore, if $f$ vanishes at every cusp (i.e., the limit $\lim_{z\to i\infty} f_{\varrho}(z)=0$), then we call $f$ a \begin{it}cusp form\end{it}.  

\subsubsection{Half-integral weight forms}
We are particularly interested in the case where $r=k+1/2$ with $k\in\N_0$ and 
\[
\Gamma=\Gamma_0(M):=\left\{ \left(\begin{matrix}a&b\\ c&d\end{matrix}\right): M\mid c\right\}
\]
for some $M\in\N$ divisible by $4$.  The multiplier system is given such that there exists a character (also commonly called \begin{it}Nebentypus\end{it}) $\chi:\Z/M\Z\to \C$ for which 
\[
\frac{f(\gamma z)}{f(z)} = \chi(d) \frac{\Theta^{2k+1}(\gamma z)}{\Theta^{2k+1}(z)}.
\] 
The space of such modular forms we call the space of weight $k+1/2$ modular forms of level $4N$ and character  $\chi$ and denote the space by $M_{k+1/2}(4N,\chi)$. The subspace of cusp forms we denote by $S_{k+1/2}(4N,\chi)$.  Whenever the character is trivial, we omit it from the notation.  By \eqref{eqn:modularity} with $\gamma=T:=\left(\begin{smallmatrix} 1&1\\ 0 &1\end{smallmatrix}\right)$, we see that for $f\in M_{k+1/2}(4N,\chi)$, we have $f(z+1)=f(z)$, and hence $f$ has a Fourier expansion ($a_{f}(n)\in\C$)
\begin{equation}\label{eqn:fexp}
f(z)=\sum_{n\geq 0} a_{f}(n) e^{2\pi i n z}.
\end{equation}
The restriction $n\geq 0$ follows from the fact that $f$ is bounded as $z\to i\infty$.  One commonly sets $q:=e^{2\pi i z}$ and associates the above expansion with the corresponding formal power series, using them interchangeably unless explicit analytic properties of the function $f$ are required.

\subsubsection{Kohnen's plus space and natural operators}
We say that $f\in M_{k+1/2}(4N,\chi)$ is in \begin{it}Kohnen's plus space\end{it} \cite{Kohnenplusspace} if $a_f(n)=0$ for all $n\in\N_0$ with $(-1)^k n\equiv 2,3\pmod{4}$.  The subspace of forms in Kohnen's plus space is written $M_{k+1/2}^+(N,\chi)$ and the subspace of cusp forms is denoted by $S_{k+1/2}^+(N,\chi)$.  For every $\ell\nmid N$, there is a natural family of Hecke operators $T_{\ell^2}$, whose action on the Fourier expansion \eqref{eqn:fexp} of $S\in M_{k+1/2}^+(N,\chi)$ is given by
\[
f|T_{\ell^2}(z) := \sum_{n\geq 1}\left(a_f\!\left(\ell^2 n\right) + \chi(\ell)\left(\frac{(-1)^kn}{\ell}\right) \ell^{k-1}a_f(n) +p^{2k-1}a_f\!\left(\frac{n}{p^2}\right)\right)q^n.
\]
The operators $T_{\ell^2}$ preserve the space $S_{k+1/2}^+(N,\chi)$.  We also make use of the operator $U_{\ell^{2}}$ given by 
\[
f\big| U_{\ell^{2}}(z):=\sum_{n=1}^{\infty}a_f\!\left(n\ell^{2}\right)q^{n}.
\]
It is well-known (cf. Section 3.2 in \cite{Onotheweb}) that if $f\in S_{k+1/2}(4N,\chi)$, then  
\begin{equation}\label{eqn:Uop}
f\big| U_{\ell^{2}}\in S_{k+\frac{1}{2}}\left(4N\ell^{2},\left(\frac{4\ell^{2}}{\cdot}\right)\chi\right).
\end{equation}
Moreover, for $\ell_1,\ell_2$ relatively prime with $\ell_1\nmid N$, $T_{\ell_1^2}$ and $U_{\ell_2}^2$ commute.  Thus if $f$ is a Hecke eigenform, then $f|U_{\ell^2}$ is also a Hecke eigenform with the same eigenvalues.

\subsubsection{Theta series and modular forms}
Siegel \cite{Siegel} (see also \cite[Proposition 2.1]{Shimura}) proved that if $Q$ is an $(2k+1)$-ary quadratic form with Gram matrix $A$, then $\Theta_Q\in M_{k+1/2}(N,\chi)$ for $N\in \N$ such that $NA^{-1}$ has integral coefficients and moreover
\begin{equation}\label{eqn:thetadiff}
\Theta_Q-\Theta_{\gen(Q)}\in S_{k+1/2}(N,\chi).
\end{equation}

Moreover, by \cite[Theorem 1.44 and Proposition 3.7 (1)]{Onotheweb} or \cite[Proposition 2.1]{Shimura}, the unary theta functions $h_{t,\psi}$ defined in \eqref{eqn:unarydef} are elements of $S_{3/2}(4tN_{\psi}^2,\chi)$ for $\chi=\psi\chi_{-4}\left(\frac{4t}{\cdot}\right)$ and where $N_{\psi}$ denotes the conductor of $\psi$.  The subspace of $S_{3/2}(N,\chi)$ spanned by unary theta functions we denote by $U_{3/2}(N,\chi)$ and its orthogonal complement in $S_{3/2}(N,\chi)$ we denote by $U_{3/2}^{\bot}(N,\chi)$, where orthogonality is taken with respect to the Petersson inner product
\[
\left<f,g\right>:=\frac{1}{\left[\SL_2(\Z):\Gamma_0(4N)\right]}\int_{\Gamma_0(4N)\backslash \H} f(z)\overline{g(z)} y^{3/2} \frac{dx dy}{y^2}.
\]
Here $\left[\SL_2(\Z):\Gamma_0(4N)\right]$ is the index of $\Gamma_0(4N)$ in $\SL_2(\Z)$.   We use the fact that orthogonality from unary theta functions is preserved by $U_{\ell^2}$; this is well-known to the experts but we provide a proof for the convenience of the reader.
\begin{lem}\label{lem:unaryU}
If $f\in U_{3/2}^{\perp}(N,\chi)$ for some $N\in\N$ and character $\chi$, then \[
f\big| U_{\ell^{2}}\in U_{\frac{3}{2}}^{\perp}\left(4N\ell^{2},\left(\frac{4\ell^{2}}{\cdot}\right)\chi\right).
\]
\end{lem}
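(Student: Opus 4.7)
The plan is to use an adjoint argument with respect to the Petersson pairing. For any unary theta function $h \in U_{3/2}(4N\ell^{2}, \chi')$ with $\chi' = \left(\frac{4\ell^{2}}{\cdot}\right)\chi$, adjointness yields
\[
\left\langle f \,|\, U_{\ell^{2}},\, h\right\rangle = \left\langle f,\, h \,|\, U_{\ell^{2}}^{*}\right\rangle,
\]
where $U_{\ell^{2}}^{*}\colon S_{3/2}(4N\ell^{2}, \chi') \to S_{3/2}(4N, \chi)$ is the adjoint operator (well-defined because the normalization of the Petersson inner product by the index of the congruence subgroup makes it agree at different levels on forms lying at the lower level). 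Since by hypothesis $f \in U_{3/2}^{\perp}(N,\chi)$, the right-hand side vanishes whenever $h \,|\, U_{\ell^{2}}^{*} \in U_{3/2}(N, \chi)$. Thus the lemma reduces to showing that $U_{\ell^{2}}^{*}$ sends unary theta functions at level $4N\ell^{2}$ to unary theta functions at level $4N$.

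To make $U_{\ell^{2}}^{*}$ explicit, I would use the standard coset decomposition, writing $f \,|\, U_{\ell^{2}}$ as a normalized sum of slash operators by matrices of the form $\left(\begin{smallmatrix} 1 & j\\ 0 & \ell^{2}\end{smallmatrix}\right)$; taking the formal adjoint via the substitution $z \mapsto -1/z$ then expresses $U_{\ell^{2}}^{*}$, up to scalar, as the level-raising operator $V_{\ell^{2}} \colon g(z) \mapsto g(\ell^{2} z)$, possibly composed with half-integer weight Atkin--Lehner operators at $\ell$ (following the procedure in \cite{Shimura}). The action of $V_{\ell^{2}}$ on a unary theta function is immediate from its Fourier expansion:
\[
V_{\ell^{2}} h_{\psi,t}(z) = \sum_{m \geq 1} \psi(m)\, m \, q^{\ell^{2} t m^{2}} = h_{\psi,\, \ell^{2}t}(z),
\]
which is again a unary theta function. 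The Atkin--Lehner components act by twisting the character $\psi$ or rescaling the parameter $t$, and so also preserve the unary theta subspace.

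The main obstacle will be the careful bookkeeping of the theta multiplier and the Atkin--Lehner normalizations in half-integer weight, since one must track subtle factors to ensure that the image of $U_{\ell^{2}}^{*}$ indeed lands in $S_{3/2}(4N, \chi)$ with the correct character $\chi$. Once the adjoint has been correctly recorded, the invariance of the unary theta subspace under $U_{\ell^{2}}^{*}$ reduces to a direct Fourier-coefficient computation, completing the proof.
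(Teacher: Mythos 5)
Your adjoint strategy is natural, but as written it contains a genuine gap, and it is quite different from what the paper does. The trouble is your identification of $U_{\ell^{2}}^{*}$. The operator $V_{\ell^{2}}\colon g(z)\mapsto g(\ell^{2}z)$ \emph{raises} the level, whereas the adjoint you need must land in $S_{3/2}(4N,\chi)$; any such level-lowering adjoint necessarily involves a trace map from $\Gamma_0(4N\ell^{2})$ down to $\Gamma_0(4N)$, because the formal coset computation with the matrices $\left(\begin{smallmatrix}1&j\\0&\ell^{2}\end{smallmatrix}\right)$ only unfolds correctly after re-symmetrizing over $\Gamma_0(4N\ell^{2})\backslash\Gamma_0(4N)$. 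Consequently the final step cannot be ``a direct Fourier-coefficient computation'': you would have to show that the trace of a unary theta function from level $4N\ell^{2}$ lands in the unary theta subspace at level $4N$, and the Fourier coefficients of a trace are not accessible termwise. That statement is true, but proving it requires exactly the input your sketch omits: one must know that unary theta functions are Hecke eigenforms with Eisenstein-type eigenvalues (roughly $\psi(q)(q+1)$), while eigenforms in $U_{3/2}^{\perp}$ have cuspidal eigenvalues bounded as in Duke's theorem, so the two eigenvalue systems can never coincide. In other words, the heart of the lemma is hiding inside the step you defer to ``bookkeeping.''

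For comparison, the paper avoids the adjoint entirely. It decomposes $f|U_{\ell^{2}}=f_0+f_1$ with $f_0$ in the unary theta subspace of level $4N\ell^{2}$ and $f_1$ orthogonal to it, observes from \eqref{eqn:unarydef} that a nonzero coefficient $a_{f_0}(n_0)$ forces the linear growth $a_{f_0}(n_0m^{2})=m\,a_{f_0}(n_0)$ along $m\equiv 1\pmod{4N\ell^{2}}$, and plays this against Duke's bound $a_{f_1}(n)\ll_{\varepsilon} n^{\frac{13}{28}+\varepsilon}$ together with the same bound applied to $f$ itself (valid since $f$ is orthogonal to unary theta functions at the original level) to reach a contradiction. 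If you pursue the adjoint route you will still need Duke's bound or the Shimura-lift eigenvalue dichotomy at the point where the trace enters, so the growth argument is the more economical path; I would either adopt it or supply a complete proof that your $U_{\ell^{2}}^{*}$ preserves the unary theta subspaces.
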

\begin{proof}
By \eqref{eqn:Uop}, $f| U_{\ell^{2}}$ is a cusp form of weight $3/2$, level $4N\ell^2$, and character $\chi':=\left(\frac{4\ell^{2}}{\cdot}\right)\chi$.  It remains to show that the projection of $f|U_{\ell^2}$ to the subspace of unary theta functions is trivial.  The basic argument is to show that if this projection is non-zero, then the coefficients of $f|U_{\ell^2}$ grow too fast. 

We first decompose
\begin{equation}\label{eqn:fUelldecomp}
f| U_{\ell^{2}}=f_0 + f_1
\end{equation}
with $f_0\in U_{3/2}(4N\ell^{2},\chi')$ and $f_1\in U_{3/2}^{\perp}(4N\ell^{2},\chi')$.  However, for $f_1\in U_{3/2}^{\perp}(4N\ell^{2},\chi')$, Duke \cite{Duke} has shown that for every $\epsilon>0$, we have 
\begin{equation}\label{eqn:Dukebound}
\left|a_{f_1}(n)\right|\ll_{f_1,\epsilon}n^{\frac{13}{28}+\epsilon}.
\end{equation}
  Suppose for contradiction that $a_{f_0}(n_0)\neq 0$ for some $n_0\in\N$.  Since 
\[
f_0=\sum_{\psi,t} \alpha_{\psi,t} h_{\psi,t},
\]
where the sum runs over $\psi$ and $t$ for which $h_{\psi,t}$ belongs to $S_{3/}(4N\ell^2,\chi')$ (in particular, the conductor of $\psi$ is a divisor of $4N\ell^2$ and $t\mid 4N\ell^2$).  By \eqref{eqn:unarydef}, we conclude that $n_0=t_0m_0^2$ for some $t_0,m_0\in\N$ with $t_0$ squarefree and
\begin{equation}\label{eqn:t0m0^2}
a_{f_0}\!\left(n_0\right) = \sum_{\psi}\sum_{\substack{\frac{t}{t_0}\in\Z^2\\ \frac{t}{t_0}\mid m_0^2}} \alpha_{\psi,t} a_{h_{\psi,t}}\!\left(t_0m_0^2\right)= \sum_{\psi}\sum_{\substack{\frac{t}{t_0}\in\Z^2\\ \frac{t}{t_0}\mid m_0^2}} \alpha_{\psi,t}\psi\left(m_0\sqrt{\frac{t_0}{t}}\right) m_0\sqrt{\frac{t_0}{t}}.
\end{equation}
Note that for any $m\equiv 1\pmod{4N\ell^2}$, we have $t/t_0\mid m_0^2m^2$ if and only if $t/t_0\mid m_0^2$ (because $t\mid 4N\ell^2$) and 
\[
\psi\left(m_0m\sqrt{\frac{t_0}{t}}\right)=\psi\left(m_0\sqrt{\frac{t_0}{t}}\right).
\]
Hence \eqref{eqn:unarydef} and \eqref{eqn:t0m0^2} imply that for any $m\equiv 1\pmod{4N\ell^2}$, we have
\begin{align*}
a_{f_0}\!\left(n_0m^2\right)&= \sum_{\psi}\sum_{\substack{\frac{t}{t_0}\in\Z^2\\ \frac{t}{t_0}\mid m_0^2m^2}} \alpha_{\psi,t}\psi\left(m_0m\sqrt{\frac{t_0}{t}}\right) m_0m\sqrt{\frac{t_0}{t}}\\
&= m\sum_{\psi}\sum_{\substack{\frac{t}{t_0}\in\Z^2\\ \frac{t}{t_0}\mid m_0^2}} \alpha_{\psi,t}\psi\left(m_0\sqrt{\frac{t_0}{t}}\right) m_0\sqrt{\frac{t_0}{t}}=ma_{f_0}\!\left(n_0\right).  
\end{align*}
Combining this with \eqref{eqn:fUelldecomp} and \eqref{eqn:Dukebound}, for $m\equiv 1\pmod{4N\ell^2}$, we obtain 
\[
a_{f}\!\left(n_0m^2\ell^2\right)=a_{f|U_{\ell}^2}\!\left(n_0m^2\right) = ma_{f_0}\!\left(n_0\right) + O\!\left(m^{\frac{13}{14}+\varepsilon}\right).
\]
Since $f\in U_{3/2}^{\perp}(4N,\chi)$, for $m$ sufficiently large this contradicts Duke's bound \eqref{eqn:Dukebound}.  This contradiction implies that $a_{f_0}(n_0)=0$ for all $n_0$, so that $f_0=0$, yielding the claim.  

\end{proof}

\section{Precise statement of Conjecture \ref{conj:intro}}\label{sec:defs}

Let $B_{p}$ over $\mathbb{Q}$ be the unique quaternion algebra which ramifies at exactly the primes $p$ and $\infty$ and let $\mathcal{O}$ be one of its maximal orders.  The algorithm by Chevyrev and Galbraith \cite{C-G} constructs an elliptic curve $E$ over $\mathbb{F}_{p^{2}}$, such that the endomorphism ring is isomorphic to the maximal order, i.e. $\mathrm{End}(E)\cong\mathcal{O}$.  
They proved that their algorithm halts unless there exists another non-conjugate maximal order $\mathcal{O}'$ for which 
\begin{equation}\label{eqn:optdom}
a_{\mathcal{O}'^T}(n)\geq a_{\mathcal{O}^T}(n)
\end{equation}
for every $n\in\N_0$, where $a_{\mathcal{O}^T}(n)$ is defined in \eqref{eqn:aOT}.  Following \cite{C-G}, we thus say that $\mathcal{O}^{\prime T}$ \begin{it}optimally dominates\end{it} $\mathcal{O}^{T}$ if \eqref{eqn:optdom} holds for all $n\in\N_0$.  Chevyrev and Galbraith then conjectured in \cite[Conjecture 1]{C-G} that no maximal order may optimally dominate another.
\begin{conjecture}[Chevyrev--Galbraith \cite{C-G}]\label{conj:main}
Let $\mathcal{O}$ and $\mathcal{O}^{\prime}$ be maximal orders of $B_{p}$. If $\mathcal{O}^{\prime T}$ optimally dominates $\mathcal{O}^{T}$, then $\mathcal{O}$ and $\mathcal{O}^{\prime}$ are of the same type.
\end{conjecture}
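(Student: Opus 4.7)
The plan is to analyze the cusp form $\theta:=\Theta_{\mathcal{O}'^T}-\Theta_{\mathcal{O}^T}$ and force it to vanish via a sign-change theorem. First, I would translate the hypothesis \eqref{eqn:optdom} into non-negativity of the Fourier coefficients of $\theta$. Every representation of $n$ by a positive-definite ternary form $Q$ factors uniquely as a positive scalar $d$ times a primitive representation of $n/d^{2}$, so
\[
r_{Q}(n)=\sum_{d^{2}\mid n}a_{Q}\!\left(n/d^{2}\right)
\]
for $Q\in\{\mathcal{O}^T,\mathcal{O}'^T\}$, and the pointwise inequality $a_{\mathcal{O}'^T}\geq a_{\mathcal{O}^T}$ immediately yields $r_{\mathcal{O}'^T}(n)\geq r_{\mathcal{O}^T}(n)$ for every $n\in\N_0$.

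Next, I would show that $\theta\in S_{3/2}\cap U_{3/2}^{\perp}$ (at an appropriate level and character). Any two maximal orders of a definite quaternion algebra over $\Q$ are locally conjugate at every finite place by the local structure theorem for maximal orders in a quaternion algebra over a $p$-adic field, so $\mathcal{O}^T$ and $\mathcal{O}'^T$ lie in the same genus. Consequently $\Theta_{\gen(\mathcal{O}^T)}=\Theta_{\gen(\mathcal{O}'^T)}$, and \eqref{eqn:thetadiff} exhibits $\theta$ as the difference of the two cusp forms $\Theta_{Q}-\Theta_{\gen(Q)}$. For the orthogonality to $U_{3/2}$, I would invoke Schulze-Pillot's refinement of the Siegel mass formula, which places $\Theta_Q-\Theta_{\spn(Q)}$ in $U_{3/2}^{\perp}$ for any positive-definite ternary $Q$; it then remains to show that the unary theta contributions from the spinor genera of $\mathcal{O}^T$ and $\mathcal{O}'^T$ cancel, which I would handle by a local analysis of the spinor norm on Gross lattices in $B_p$.

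With $\theta\in S_{3/2}\cap U_{3/2}^{\perp}$ having non-negative Fourier coefficients, I would apply the sign-change theorem of Bruinier-Kohnen and its refinement by Kohnen-Lau-Wu, which asserts that a non-zero cusp form of weight $3/2$ orthogonal to the unary theta subspace has Fourier coefficients that change sign infinitely often on suitable arithmetic progressions. Since these statements are formulated most cleanly in Kohnen's plus space, I would first apply $U_{\ell^{2}}$ operators for appropriate primes $\ell$ to pass into a plus space, invoking Lemma \ref{lem:unaryU} to preserve orthogonality to $U_{3/2}$. Non-negativity combined with infinitely many sign changes then forces $\theta\equiv 0$, so $r_{\mathcal{O}^T}(n)=r_{\mathcal{O}'^T}(n)$ for every $n$. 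Combined with \eqref{eqn:optdom} and the identity displayed above, each non-negative summand $a_{\mathcal{O}'^T}(n/d^{2})-a_{\mathcal{O}^T}(n/d^{2})$ must vanish, giving $a_{\mathcal{O}^T}(n)=a_{\mathcal{O}'^T}(n)$ for all $n$ and, via \eqref{eqn:primreps}, $h_{\mathcal{O}}(d)=h_{\mathcal{O}'}(d)$ for every $d$.

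The final step is to deduce $\mathcal{O}\sim\mathcal{O}'$ from the coincidence of all optimal embedding numbers, which follows from a standard argument via the Brandt matrices of $B_{p}$: the diagonal theta series $\Theta_{\mathcal{O}_i^T}$, as $\mathcal{O}_i$ ranges over type representatives, are linearly independent and hence separate the types. The main obstacle I anticipate lies in the second paragraph: the Gross-lattice genus can in principle split over several spinor genera, so controlling the unary theta part of $\theta$ requires a careful local-global argument using the spinor norm rather than simply quoting Schulze-Pillot as a black box. A secondary technical point is packaging the sign-change theorem for a general $\theta$ rather than a Hecke eigenform in the plus space, which is precisely where the $U_{\ell^{2}}$ projection together with Lemma \ref{lem:unaryU} is meant to bridge the gap.
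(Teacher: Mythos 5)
Your overall strategy is the same as the paper's: form $g=\vartheta_{\mathcal{O}'^T}-\vartheta_{\mathcal{O}^T}$, note that local conjugacy of maximal orders puts the two Gross lattices in one genus so that $g$ is a cusp form, show $g\in U_{3/2}^{\perp}$, and then play the Kohnen--Lau--Wu sign-change theorem against the non-negativity of the coefficients (using $U_{\ell^2}$ together with Lemma \ref{lem:unaryU} to reach the non-squarefree indices) to force $g\equiv 0$, before concluding that equal theta series implies equal type. However, there is one genuine gap, and you have correctly located it yourself: the orthogonality of $g$ to the unary theta functions. Your route via Schulze-Pillot reduces this to showing that the spinor-genus theta series of $\mathcal{O}^T$ and $\mathcal{O}'^T$ coincide (e.g.\ that the genus of the Gross lattice is a single spinor genus), and you leave that local spinor-norm computation undone; as written the proof does not close. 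The paper avoids spinor genera entirely: since $g$ lies in Kohnen's plus space $S_{3/2}^+(p)$ of \emph{prime} level, Kohnen's Hecke-isomorphism with $S_2(p)$ gives a basis of eigenforms whose Hecke eigenvalues are those of weight $2$ \emph{cusp} forms, whereas unary theta functions have Eisenstein-type eigenvalues; the Hermiticity of $T_{\ell^2}$ then forces orthogonality (Lemma \ref{lem:OTdiff}). You should either carry out the spinor-norm argument or substitute this eigenvalue comparison.

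Two smaller points. First, your stated reason for applying $U_{\ell^2}$ (``to pass into a plus space'') is not the real one: Theorem \ref{thm: Kohnen-Lau-Wu} does not require the plus space, but it only produces sign changes along $\{a_g(tn^2)\}_n$ when $a_g(t)\neq 0$ for \emph{squarefree} $t$, so non-negativity only kills the squarefree coefficients directly; the $U_{m_0^2}$ operator is needed to promote $a_g(tm_0^2)$ to the $t$-th coefficient of a new form in $U_{3/2}^{\perp}$ of higher level, which is exactly what Lemma \ref{lem:unaryU} licenses. You name the right tools, so this is an imprecision rather than an error. Second, your final step rests on the assertion that the theta series $\vartheta_{\mathcal{O}_i^T}$ of the type representatives are linearly independent; that is stronger than what you need and not something you can simply call standard without proof. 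The paper instead gets from $\vartheta_{\mathcal{O}^T}=\vartheta_{\mathcal{O}'^T}$ to $\mathcal{O}\sim\mathcal{O}'$ by citing Schiemann's theorem that positive-definite ternary forms are determined by their theta series together with the Gross--Lucianovic bijection between $\GL_3(\Z)$-classes of ternary forms and isomorphism classes of quaternion orders (Lemma \ref{lem: equi}); this is the cleaner citation to use.
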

\begin{remarks}
\noindent

\noindent
\begin{enumerate}[leftmargin=*]
\item
Conjecture \ref{conj:main} is equivalent to Conjecture \ref{conj:intro} because all isomorphisms of orders come from conjugation.  
\item 
Paralleling the definition of type for maximal orders, we say that $\mathcal{O}^{\prime T}$ and $\mathcal{O}^{T}$ have the same type if there is a non-zero element $c\in B_{p}$ such that $c\mathcal{O}^{T}c^{-1}=\mathcal{O}^{\prime T}$, and we write $\mathcal{O}^{T}\sim\mathcal{O}^{\prime T}$. By Lemma 4 in \cite{C-G}, we know that $\mathcal{O}^{T}\sim\mathcal{O}^{\prime T}$ if and only if $\mathcal{O}\sim\mathcal{O}^{\prime}$.
\item
There is a second conjecture of Chevyrev and Galbraith about the occurrence of the smallest $n_0$ for which both $a_{\mathcal{O^{\prime}}^{T}}(n_1)\geq a_{\mathcal{O}^{T}}(n_1)$ and $a_{\mathcal{O^{\prime}}^{T}}(n_2)< a_{\mathcal{O}^{T}}(n_2)$ occur for some $n_1,n_2<n_0$.  They conjecture in particular that $n_0=O(p)$ and determine the running time of their algorithm under this assumption.  In our context, this $n_0$ corresponds to the first sign change.  Although there is some discussion in \cite{K-L-W} about the size of $n_0$, there are a number of inexplicit constants which would need to be worked out to determine the size of $n_0$ implied by their theorem, and it is not expected that their proof would yield a bound anywhere close to the conjectured $O(p)$.  The first author is trying to determine (and improve upon) an explicit bound for $n_0$ in his Masters thesis.  
\item
\label{a and r rlts}By Lemma 11 in \cite{C-G}, we have $a_{\mathcal{O^{\prime}}^{T}}(n)\geq a_{\mathcal{O}^{T}}(n)$
for all $n$ if and only if $r_{\mathcal{O^{\prime}}^{T}}(n)\geq r_{\mathcal{O}^{T}}(n)$
for all $n$.
\end{enumerate}
\end{remarks}

\begin{comment}
If we write $\mathcal{O}=\mathbb{Z}[1,\widetilde{u_{1}},\widetilde{u_{2}},\widetilde{u_{3}}]$ as a $\mathbb{Z}$-lattice generatated by the three generators $\widetilde{u}_{i}$'s,
then one sees that $\mathcal{O}^{T}=\mathbb{Z}[u_{1},u_{2},u_{3}]$,
where $u_{i}=2\widetilde{u_{i}}-\mathrm{Tr}(\widetilde{u_{i}})$ (trace-zero).
Hence, for any $x\in\mathcal{O}^{T}$, we may write $x=(x_{1},x_{2},x_{3})$,
where $x_{i}\in\mathbb{Z}$, as coordinates with respect to $\mathcal{O}^{T}=\mathbb{Z}[u_{1},u_{2},u_{3}]$
and hence the reduced norm $\mathrm{Nr}(x)$ is a positive definite
ternary quadratic form for all $x\in\mathcal{O}^{T}$. It is natural
to associate a $3\times3$ positive definite matrix $M=(\mathrm{Tr}(u_{i}\overline{u_{j}}))_{i,j}$
with the reduced norm on $\mathcal{O}^{T}$, such that $\mathrm{Nr}(x)=\frac{1}{2}x^{T}Mx$
for all $x\in\mathbb{\mathbb{Z}}^{3}$.

 We also define the number of representations of $n$ to be 
\[
r_{\mathcal{O}^{T}}(n)=\#\{x=(x_{1},x_{2},x_{3})\in\mathcal{O}^{T}:\mathrm{Nr}(x)=n\}.
\]

\end{comment}

\section{Proof of Theorem \ref{thm:main}}\label{sec:proofs}

Recall that for a maximal order $\mathcal{O}$ of $B_p$, the associated reduced norm $\Nr$ on $\mathcal{O}^T$ is a positive-definite integral ternary quadratic form $Q_{\mathcal{O}^T}$.  Gross \cite[(12.8)]{Gross} constructed the associated theta series 
\begin{equation}\label{eqn:thetaOT}
\vartheta_{\mathcal{O}^T}:=\Theta_{Q_{\mathcal{O}^T}},
\end{equation}
which is an element of Kohnen's plus space $M_{3/2}^+(p)$.  The following lemma plays a key role in the proof of Conjecture \ref{conj:main}.
\begin{lem}\label{lem:OTdiff}
If $\mathcal{O}$ and $\mathcal{O}'$ are two maximal orders in the quaternion algebra $B_p$, then 
\[
\vartheta_{\mathcal{O}^T}-\vartheta_{\mathcal{O}'^T}\in S_{\frac{3}{2}}^+(p).
\]
Furthermore, $\vartheta_{\mathcal{O}^T}-\vartheta_{\mathcal{O}'^T}\in U_{3/2}^{\bot}(4p)$.  

\end{lem}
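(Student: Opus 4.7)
The proof splits into two assertions: that the difference is a cusp form in Kohnen's plus space at level $p$, and that this cusp form is moreover orthogonal to all unary theta functions at level $4p$.

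For cuspidality, both $\vartheta_{\mathcal{O}^{T}}$ and $\vartheta_{\mathcal{O}^{\prime T}}$ already lie in $M_{3/2}^{+}(p)$ by the construction of Gross recalled in \eqref{eqn:thetaOT}, so their difference does as well. To upgrade this to a cusp form I would apply \eqref{eqn:thetadiff}: the defects $\vartheta_{\mathcal{O}^{T}}-\Theta_{\gen(Q_{\mathcal{O}^{T}})}$ and $\vartheta_{\mathcal{O}^{\prime T}}-\Theta_{\gen(Q_{\mathcal{O}^{\prime T}})}$ are cusp forms by Siegel, so it suffices to verify $\Theta_{\gen(Q_{\mathcal{O}^{T}})}=\Theta_{\gen(Q_{\mathcal{O}^{\prime T}})}$, i.e.\ that $Q_{\mathcal{O}^{T}}$ and $Q_{\mathcal{O}^{\prime T}}$ lie in a common genus. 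This follows from the local conjugacy of maximal orders of $B_{p}$: at each finite prime $\ell$ the algebra $B_{p}\otimes_{\Q}\Q_{\ell}$ admits a unique conjugacy class of maximal order, so there exists $c_{\ell}\in(B_{p}\otimes_{\Q}\Q_{\ell})^{\times}$ with $c_{\ell}(\mathcal{O}\otimes_{\Z}\Z_{\ell})c_{\ell}^{-1}=\mathcal{O}'\otimes_{\Z}\Z_{\ell}$; conjugation by $c_{\ell}$ preserves the reduced trace, and therefore induces a local $\GL_{3}(\Z_{\ell})$-equivalence between the Gross lattices. Combined with the plus-space condition, this gives $\vartheta_{\mathcal{O}^{T}}-\vartheta_{\mathcal{O}^{\prime T}}\in S_{3/2}^{+}(p)$.

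For orthogonality, I would invoke the Schulze-Pillot refinement of Siegel's mass formula at the level of spinor genera: for a positive-definite ternary form $Q$, the difference $\Theta_{Q}-\Theta_{\spn(Q)}$ lies in $U_{3/2}^{\perp}$, while $\Theta_{\spn(Q)}-\Theta_{\gen(Q)}$ is a linear combination of unary theta functions. Writing
\[
\vartheta_{\mathcal{O}^{T}}-\vartheta_{\mathcal{O}^{\prime T}}=\bigl(\vartheta_{\mathcal{O}^{T}}-\Theta_{\spn(Q_{\mathcal{O}^{T}})}\bigr)-\bigl(\vartheta_{\mathcal{O}^{\prime T}}-\Theta_{\spn(Q_{\mathcal{O}^{\prime T}})}\bigr)+\bigl(\Theta_{\spn(Q_{\mathcal{O}^{T}})}-\Theta_{\spn(Q_{\mathcal{O}^{\prime T}})}\bigr),
\]
the first two parenthesised differences already lie in $U_{3/2}^{\perp}(4p)$, and the orthogonality claim reduces to identifying the third term with zero. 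For this I would show that $\mathcal{O}^{T}$ and $\mathcal{O}^{\prime T}$ lie in a common spinor genus by refining the local conjugators $c_{\ell}$ constructed above: since $B_{p}$ is definite, the spinor norm on the local orthogonal group of the Gross lattice is detected by the reduced norm on the quaternionic centralizer, so multiplying each $c_{\ell}$ by a suitable element of $\Q_{\ell}^{\times}$ clears the spinor-norm obstruction and realises the local equivalences as rotations with trivial spinor class.

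The main obstacle is this last step: verifying that any two Gross lattices of maximal orders in $B_{p}$ lie in a single spinor genus of ternary quadratic forms. The cuspidality argument and the Schulze-Pillot reduction to a spinor-genus comparison are routine applications of established theory; once the spinor-genus identification is in hand, both assertions of the lemma follow immediately, and the corresponding genus-theta and spinor-theta contributions cancel out of the difference $\vartheta_{\mathcal{O}^{T}}-\vartheta_{\mathcal{O}^{\prime T}}$.
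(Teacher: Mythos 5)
Your treatment of the first assertion is essentially the paper's own: local conjugacy of maximal orders of $B_p$ puts $Q_{\mathcal{O}^T}$ and $Q_{\mathcal{O}'^T}$ in a single genus, the genus theta series cancel in the difference, and \eqref{eqn:thetadiff} gives cuspidality, with membership in the plus space holding by construction. No issues there.

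For the orthogonality statement you take a genuinely different route from the paper, and it contains a real gap. The paper never touches the spinor-genus decomposition in its proof: it uses Kohnen's Hecke isomorphism $S_{3/2}^{+}(p)\cong S_{2}(p)$ to expand the difference in simultaneous Hecke eigenforms, then uses Hermiticity of $T_{\ell^2}$ to show that a nonzero pairing with $h_{t,\psi}$ would force the eigenvalues of a plus-space eigenform (which are weight-$2$ cuspidal eigenvalues) to agree for all $\ell$ with the Eisenstein-type eigenvalues of a unary theta function, which is impossible. Your route via Schulze-Pillot correctly reduces the claim to showing $\Theta_{\spn(Q_{\mathcal{O}^T})}=\Theta_{\spn(Q_{\mathcal{O}'^T})}$, and you rightly identify this as the crux, but the argument you offer for it does not work. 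The rotation of $B_p^{0}\otimes\Q_{\ell}$ induced by conjugation by $c_{\ell}$ has spinor norm equal to the square class of $\Nr(c_{\ell})$, and replacing $c_{\ell}$ by $\lambda c_{\ell}$ with $\lambda\in\Q_{\ell}^{\times}$ leaves the rotation unchanged while multiplying $\Nr(c_{\ell})$ by $\lambda^{2}$; the square class, and hence the spinor norm, is untouched. So ``multiplying each $c_{\ell}$ by a suitable element of $\Q_{\ell}^{\times}$'' cannot clear any spinor-norm obstruction. What would actually close the gap is composing $c_{\ell}$ with units of the local maximal order $\mathcal{O}\otimes\Z_{\ell}$ (whose reduced norms exhaust $\Z_{\ell}^{\times}$), i.e., verifying that the local spinor norm groups of the automorphisms of the Gross lattice contain $\Z_{\ell}^{\times}(\Q_{\ell}^{\times})^{2}$ at every prime, including $\ell=2$ and $\ell=p$, and then running the standard idelic count to conclude that the genus consists of a single spinor genus. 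That conclusion is true, but it requires this local computation (or the paper's Hecke-theoretic detour); as written, your key step fails.
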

\begin{proof}
As noted by Gross (see \cite[p. 130]{Gross}), the maximal orders of $B_p$ are all locally conjugate over $\Z_{\ell}$, from which we conclude that for all primes $\ell$
\[
Q_{\mathcal{O}^T}\sim_{\Z_{\ell}}Q_{\mathcal{O}'^T}.
\]
Thus $Q_{\mathcal{O}^T}$ and $Q_{\mathcal{O}'^T}$ are in the same genus by definition.  Hence, by \eqref{eqn:thetadiff},
\begin{multline*}
\vartheta_{\mathcal{O}^T}-\vartheta_{\mathcal{O}'^T} = \Theta_{Q_{\mathcal{O}^T}}-\Theta_{\gen\!\left(Q_{\mathcal{O}^T}\right)} + \Theta_{\gen\!\left(Q_{\mathcal{O}^T}\right)}-\Theta_{Q_{\mathcal{O}'^T}}\\
 =  \Theta_{Q_{\mathcal{O}^T}}-\Theta_{\gen\!\left(Q_{\mathcal{O}^T}\right)} + \Theta_{\gen\!\left(Q_{\mathcal{O}'^T}\right)}-\Theta_{Q_{\mathcal{O}'^T}} =  \left(\Theta_{Q_{\mathcal{O}^T}}-\Theta_{\gen\!\left(Q_{\mathcal{O}^T}\right)}\right) + \left(\Theta_{\gen\!\left(Q_{\mathcal{O}'^T}\right)}-\Theta_{Q_{\mathcal{O}'^T}}\right)
\end{multline*}
is a cusp form.  Moreover, it is contained in Kohnen's plus space of level $p$ by construction.  

It remains to show that $\vartheta_{\mathcal{O}^T}-\vartheta_{\mathcal{O}'^T}$ is orthogonal to unary theta functions.  However, since $p$ is squarefree and odd, Kohnen has proven in \cite[Theorem 2]{Kohnenplusspace} that $S_{3/2}^+(p)$ is Hecke-isomorphic to $S_{2}(p)$ under a linear combination of the Shimura lifts defined in \cite{Shimura} (and hence has a basis of simultaneous Hecke eigenforms).  Since any element of $S_{3/2}^+(p)$ may be written as a linear combination of Hecke eigenforms, it suffices to show that all of the Hecke eigenforms are orthogonal to unary theta functions.  

Next recall that the Hecke operators are Hermitian with respect to the Petersson inner product (see \cite[Section 3]{Kohnenplusspace}).  Denoting the eigenvalue of $h_{t,\psi}$ under the Hecke operator $T_{\ell^2}$ by $\lambda_{\ell}$ and the eigenvalue of an eigenform $f$ in $S_{3/2}^+(p)$ by $\lambda_{f,\ell}$, we see that
\begin{equation}\label{eqn:innerortho}
 \lambda_{\ell}\left<h_{t,\psi},f\right> = \left<h_{t,\psi}|T_{\ell^2},f\right> =\left<h_{t,\psi},f|T_{\ell^2}\right> =\overline{\lambda_{f,\ell}}\left<h_{t,\psi},f\right>.
\end{equation}
We conclude that if $h_{t,\psi}$ and $f$ are not orthogonal, then $\lambda_{\ell}=\lambda_{f,\ell}$ for all $\ell$, where we use the fact that the eigenvalues must be real because the Hecke operator is Hermitian.  However, the elements of $U_{3/2}(4p)\subset S_{3/2}(4p)$ have the same eigenvalues as weight $2$ Eisenstein series and $\lambda_{f,\ell}$ is the eigenvalue for a weight $2$ cusp form by Kohnen's Hecke-isomorphism.  The eigenvalues cannot always coincide and therefore $h_{t,\psi}$ and $f$ are orthogonal.

\end{proof}
% Kohnen's plus space is Hecke-isomorphic to $M_2(M)$, and in particular satisfies multiplicity one.  

The strategy of our proof is to study the sign changes of the Fourier coefficients of the differences $\vartheta_{\mathcal{O}^{\prime T}}-\vartheta_{\mathcal{O}^{T}}$.  For this, we require \cite[Theorem 1]{K-L-W} of Kohnen, Lau, and Wu.
\begin{thm}[Kohnen, Lau and Wu]\label{thm: Kohnen-Lau-Wu}
Let $N\geq4$ an integer divisible by $4$ and $\chi$ be a Dirichlet character modulo $N$. If $g\in U_{3/2}^{\bot}(N,\chi)$, then for any positive squarefree integer $t$ such that $a_g(t)\neq0$ and the sequence $\left\{ a_g(tn^{2})\right\} _{n\in\mathbb{N}}$ is real, the sequence $\left\{ a_g(tn^{2})\right\} _{n\in\mathbb{N}}$ contains infinitely many sign changes.
\end{thm}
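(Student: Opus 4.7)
My approach combines a Shimura-style identity for the Dirichlet series
\[
D_{g,t}(s):=\sum_{n\ge 1}\frac{a_g(tn^2)}{n^s}
\]
with Landau's classical theorem on Dirichlet series with non-negative coefficients. Since the Hecke operators $T_{\ell^2}$ for $\ell\nmid N$ are Hermitian with respect to the Petersson inner product and preserve $U_{3/2}(N,\chi)$, they also preserve its orthogonal complement, so $U_{3/2}^{\perp}(N,\chi)$ admits a basis of simultaneous Hecke eigenforms and we may decompose $g=\sum_i c_i f_i$ along such a basis. For each eigenform $f_i$, Shimura's construction produces an Euler-product identity of the shape
\[
\sum_{n\ge 1}\frac{a_{f_i}(tn^2)}{n^s}\;=\;a_{f_i}(t)\,L(s,F_{i,t})\,E_i(s),
\]
where $F_{i,t}$ is the weight-$2$ form obtained by twisting the Shimura lift of $f_i$ by the quadratic character attached to $t$, $L(s,F_{i,t})$ is its Hecke $L$-function, and $E_i(s)$ is a finite Euler product over primes dividing $4Nt$. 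Multiplying by $c_i$ and summing yields the analogous identity for $D_{g,t}(s)$.

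The orthogonality hypothesis $g\in U_{3/2}^{\perp}(N,\chi)$ is used crucially at the next step: exactly as in the proof of Lemma \ref{lem:OTdiff}, eigenforms in $U_{3/2}(N,\chi)$ share Hecke eigenvalues with weight-$2$ Eisenstein series, so an eigenform in $U_{3/2}^{\perp}(N,\chi)$ must lift under Shimura to a weight-$2$ \emph{cusp} form. Consequently each $L(s,F_{i,t})$ is entire and of polynomial growth in vertical strips by the standard Hecke functional equation, while the bad factors $E_i(s)$ are rational in $\ell^{-s}$ and contribute only finitely many candidate singularities that can be analysed explicitly. Combining these facts shows that $D_{g,t}(s)$ extends to a meromorphic function on $\C$ with at most a controlled finite set of potential poles, none of which will lie on the real axis.

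To conclude, suppose for contradiction that the real sequence $\{a_g(tn^2)\}_{n\in\N}$ has only finitely many sign changes; replacing $g$ by $-g$ if necessary we may assume $a_g(tn^2)\ge 0$ for all $n\ge n_0$. The truncated series
\[
\widetilde D(s):=D_{g,t}(s)-\sum_{n<n_0}\frac{a_g(tn^2)}{n^s}
\]
then has non-negative coefficients and the same analytic behaviour as $D_{g,t}(s)$. Landau's theorem applied to $\widetilde D(s)$, together with the entire/controlled continuation from the previous step, forces the series to converge absolutely in arbitrarily far-left half-planes, whence $a_g(tn^2)=O_C(n^{-C})$ for every $C>0$. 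This contradicts the Rankin--Selberg lower bound
\[
\sum_{\substack{m\le X\\ (m,4Nt)=1}}\left|a_g(tm^2)\right|^2\;\gg_{g,t}\;X^2,
\]
which follows from the hypothesis $a_g(t)\ne 0$ together with the standard Rankin--Selberg argument applied to the weight-$2$ cusp forms $F_{i,t}$.

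The principal obstacle is making the first step sufficiently precise. Shimura's identity is cleanest for newforms of squarefree level with trivial character, whereas here $N$ is arbitrary, $\chi$ may be non-trivial, and $g$ may contain oldform components; tracking the bad Euler factors $E_i(s)$ carefully enough to rule out real singularities of $D_{g,t}(s)$ requires delicate bookkeeping. A secondary technical point is obtaining the Rankin--Selberg lower bound along the thin subsequence $n=tm^2$ rather than along all integers; the cleanest route is via a Waldspurger-type identity expressing $|a_g(tm^2)|^2/|a_g(t)|^2$ in terms of the coefficients of $F_{i,t}$, after which the integral-weight Rankin--Selberg machinery applies directly.
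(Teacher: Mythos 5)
First, note that the paper does not prove this statement at all: it is quoted (in weakened form) directly from \cite[Theorem 1]{K-L-W}, so there is no internal proof to compare against. Your proof skeleton --- decompose $g$ into Hecke eigenforms, convert $D_{g,t}(s)$ into $L$-functions of Shimura lifts, use orthogonality to unary theta functions to force the lifts to be cuspidal, and play Landau's theorem off against a Rankin--Selberg lower bound --- is in fact the actual strategy of Bruinier--Kohnen \cite{B-K} and Kohnen--Lau--Wu \cite{K-L-W}, so you have reconstructed the right architecture.

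There is, however, a genuine gap in the analytic middle step. For a weight $3/2$ eigenform $f$ the Shimura relation reads (up to bad Euler factors)
\begin{equation*}
\sum_{n\geq 1}\frac{a_f\!\left(tn^2\right)}{n^{s}}\;=\;a_f(t)\,\frac{L(s,F)}{L\!\left(s,\chi_t\right)},
\end{equation*}
where $F$ is the Shimura lift itself (not its twist by $\chi_t$) and the extra factor is the \emph{reciprocal of an infinite} Dirichlet $L$-function, not a finite Euler product $E_i(s)$. Hence every zero of $L(s,\chi_t)$ is a potential pole of $D_{g,t}(s)$, and real zeros of real Dirichlet $L$-functions in $(0,1)$ cannot be excluded unconditionally; your claim that the continuation has ``a controlled finite set of potential poles, none on the real axis'' is therefore unjustified, and the conclusion you extract from Landau --- convergence in arbitrarily far-left half-planes and $a_g(tn^2)=O_C(n^{-C})$ --- is both unobtainable and stronger than needed. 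The standard repair is cheaper: one only needs that $D_{g,t}(s)$ is holomorphic at every real point $s\geq 1$, which holds because $L(s,\chi_t)$ is non-vanishing on $\operatorname{Re}(s)\geq 1$ and $L(s,F)$ is entire (this is where cuspidality of the lift, i.e.\ orthogonality to unary thetas, enters). Landau then gives abscissa of convergence strictly less than $1$, so $\sum_{n\leq X}a_g(tn^2)\ll X$; combining this with the pointwise bound $|a_g(tn^2)|\ll_{\epsilon}n^{1/2+\epsilon}$ (Deligne applied to $F$, a bound missing from your write-up but essential here) yields $\sum_{n\leq X}|a_g(tn^2)|^2\ll X^{3/2+\epsilon}$, which contradicts your (correct) Rankin--Selberg lower bound $\gg X^{2}$. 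The oldform and bad-prime bookkeeping you flag is real, and is handled in \cite{K-L-W} exactly as in your restricted sum, by working with $n$ coprime to the level.
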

\begin{remarks}\label{rem:signchange}
\noindent

\noindent
\begin{enumerate}[leftmargin=*]
\item
Kohnen, Lau and Wu actually gave much stronger results in their paper \cite{K-L-W} but this simplified version is strong enough for our use.
\item 
One can use an argument involving the sign changes to directly show that $\vartheta_{\mathcal{O}'^T}-\vartheta_{\mathcal{O}^T}\in U_{3/2}^{\bot}(4p)$ if $\mathcal{O'}^T$ optimally dominates $\mathcal{O}^T$.  To illustrate the usage of Theorem \ref{thm: Kohnen-Lau-Wu}, we briefly sketch the proof; further details may be found in the first author's upcoming Masters thesis.  One sees directly from \eqref{eqn:unarydef} that the coefficients of unary theta functions alternate in sign.  Using a bound of Duke \cite{Duke} for the coefficients of elements of $U_{3/2}^{\bot}(4p)$, the coefficients of the difference $\vartheta_{\mathcal{O}^T}-\vartheta_{\mathcal{O}'^T}$ are dominated by the coefficients of the contribution from unary theta functions and hence alternate unless the contribution from $U_{3/2}^{\bot}(4p)$ is trivial.  However, slightly abusing notation by abbreviating
\[
r_{\mathcal{O}^T}(n):=r_{Q_{\mathcal{\mathcal{O}}^T}}(n),
\]
we may split the elements of $h\in \mathcal{O}^T$ by $\mathfrak{g}(h)=f$ (see \eqref{eqn:gcd}) to obtain 
\begin{equation}\label{eqn:rOTeval}
r_{\mathcal{O}^T}(n) = \sum_{\substack{f\in\Z\\ f^2\mid n}} a_{\mathcal{O}^T}\!\left(\frac{n}{f^2}\right).
\end{equation}
Hence if $\mathcal{O}'^T$ optimally dominates $\mathcal{O}^T$, then $r_{\mathcal{O}'^T}(n)\geq r_{\mathcal{O}^T}(n)$, and we conclude that the contribution from unary theta functions is trivial.  
\end{enumerate}
\end{remarks}

The next proposition is a key step in the proof of Theorem \ref{thm:main}.
\begin{prop}\label{prop: 1}
Let $\mathcal{O}$ and $\mathcal{O}^{\prime}$ be maximal orders of $B_{p}$. If $\mathcal{O}^{\prime T}$ optimally dominates $\mathcal{O}^{T}$, then $\vartheta_{\mathcal{O}^{\prime T}}(z)=\vartheta_{\mathcal{O}^{T}}(z)$.
\end{prop}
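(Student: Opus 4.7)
The plan is to set $g := \vartheta_{\mathcal{O}^{T}} - \vartheta_{\mathcal{O}'^{T}}$ and show that optimal domination forces $g \equiv 0$, by leveraging the sign-change phenomenon of Kohnen, Lau, and Wu (Theorem \ref{thm: Kohnen-Lau-Wu}). Lemma \ref{lem:OTdiff} immediately places $g$ in $S_{3/2}^{+}(p) \cap U_{3/2}^{\bot}(4p)$, while the optimal domination hypothesis, combined with the equivalence noted in the remarks following Conjecture \ref{conj:main}, gives $r_{\mathcal{O}'^{T}}(n) \geq r_{\mathcal{O}^{T}}(n)$ for every $n$, hence
\[
a_{g}(n) = r_{\mathcal{O}^{T}}(n) - r_{\mathcal{O}'^{T}}(n) \leq 0 \qquad \text{for every } n \geq 1.
\]
Thus $g$ is a cusp form, orthogonal to every unary theta series, whose Fourier coefficients are all real and non-positive.

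I would then argue by contradiction. Suppose $g \not\equiv 0$; since $a_{g}(0) = 0$, there exists some $N \geq 1$ with $a_{g}(N) \neq 0$. Factor $N = t m^{2}$ uniquely with $t$ squarefree and $m \in \N$. In the easy case $m = 1$, the index $t$ is already squarefree and Theorem \ref{thm: Kohnen-Lau-Wu} applied directly to $g$ forces the sequence $\{a_{g}(tn^{2})\}_{n \in \N}$ to contain infinitely many sign changes, which is impossible for a non-positive real sequence. To address the case $m > 1$, I would apply the operator $U_{m^{2}}$, decomposed as a composition of $U_{\ell^{2}}$'s over the prime factors of $m$, and set $h := g \,|\, U_{m^{2}}$. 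By iterated use of Lemma \ref{lem:unaryU} together with \eqref{eqn:Uop}, $h$ lies in $U_{3/2}^{\bot}(N',\chi')$ for some level $N'$ divisible by $4$ and some character $\chi'$. The Fourier coefficients of $h$ satisfy $a_{h}(n) = a_{g}(nm^{2})$, so $a_{h}(t) = a_{g}(N) \neq 0$, while the sequence $\{a_{h}(tn^{2})\}_{n \in \N} = \{a_{g}(tn^{2}m^{2})\}_{n \in \N}$ remains real and non-positive. A second invocation of Theorem \ref{thm: Kohnen-Lau-Wu}, now applied to $h$, forces infinitely many sign changes in this non-positive sequence, a contradiction.

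It follows that $a_{g}(N) = 0$ for every $N \geq 1$, so $g \equiv 0$ and $\vartheta_{\mathcal{O}^{T}} = \vartheta_{\mathcal{O}'^{T}}$, as desired.

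The main obstacle is precisely the passage from the squarefree case $m = 1$ to general $m$: Theorem \ref{thm: Kohnen-Lau-Wu} only produces sign changes along $\{a_{g}(tn^{2})\}$ when the squarefree-indexed coefficient $a_{g}(t)$ is already nonzero, whereas the optimal-domination hypothesis does not a priori supply such a coefficient. Lemma \ref{lem:unaryU} is what makes the $U_{m^{2}}$-twist viable, since it guarantees that relocating a nonzero coefficient of $g$ into a squarefree position via $U_{m^{2}}$ produces a cusp form that still lies in the unary-theta-orthogonal subspace, thereby remaining within the regime where Theorem \ref{thm: Kohnen-Lau-Wu} applies.
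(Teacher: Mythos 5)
Your proposal is correct and follows essentially the same route as the paper: identify $g$ as an element of $U_{3/2}^{\bot}(4p)$ with one-signed coefficients via Lemma \ref{lem:OTdiff} and the equivalence between domination of the $a_{\mathcal{O}^T}$ and the $r_{\mathcal{O}^T}$, handle squarefree indices directly with Theorem \ref{thm: Kohnen-Lau-Wu}, and reach general indices $tm^2$ by applying $U_{m^2}$ prime by prime and using Lemma \ref{lem:unaryU} to stay in the unary-theta-orthogonal subspace. The only difference is an immaterial sign convention in the definition of $g$.
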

Write
\[
g(z):=\vartheta_{\mathcal{O}^{\prime T}}(z)-\vartheta_{\mathcal{O}^{T}}(z).
\]
By Lemma \ref{lem:OTdiff}, $g\in U_{3/2}^{\bot}(4p)$, and we have $a_g(n)\geq0$ for all $n\in\N$ by assumption. Hence to conclude Proposition \ref{prop: 1}, it suffices to prove the following slightly stronger proposition.
\begin{prop}
If $g\in U_{3/2}^{\perp}(4N,\chi)$ for some $N\in\N$ and character $\chi$ and $a_g(n)\geq 0$ for all $n$, then $g=0$.
\end{prop}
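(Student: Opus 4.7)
The plan is to argue by contradiction: assuming $g\neq 0$ with $a_g(n)\geq 0$ for all $n$, I will produce a squarefree integer $t$ to which Theorem \ref{thm: Kohnen-Lau-Wu} applies, and then observe that the infinitely many sign changes forced by that theorem are incompatible with a non-negative sequence. The key trick is that Theorem \ref{thm: Kohnen-Lau-Wu} requires the \emph{squarefree} index $t$ to have $a_g(t)\neq 0$, not merely some index; so the first job is to produce, out of an arbitrary non-vanishing coefficient of $g$, a related cusp form in $U_{3/2}^{\bot}$ whose squarefree-index coefficient is nonzero. Lemma \ref{lem:unaryU} says that the operators $U_{\ell^2}$ preserve orthogonality to unary theta series, and they are precisely the tools needed for this reduction.

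Concretely, since $g$ is a cusp form and $g\neq 0$, some $n_0\in\N$ satisfies $a_g(n_0)\neq 0$. Write $n_0=tm^2$ with $t\in\N$ squarefree, and factor $m=\ell_1^{a_1}\cdots\ell_r^{a_r}$. Form
\[
g':=g\bigl|U_{\ell_1^2}^{a_1}\cdots U_{\ell_r^2}^{a_r},
\]
so that $a_{g'}(n)=a_g(nm^2)$ for every $n\in\N$. Then $a_{g'}(t)=a_g(n_0)\neq 0$, every coefficient of $g'$ is real and non-negative by hypothesis, and by iterating Lemma \ref{lem:unaryU} once per factor we obtain $g'\in U_{3/2}^{\bot}(4N',\chi')$ for some level $N'$ and some character $\chi'$ (the level grows by a factor of $\ell_i^2$ and the character is twisted by $\left(\tfrac{4\ell_i^2}{\cdot}\right)$ at each step, but this is harmless).

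Applying Theorem \ref{thm: Kohnen-Lau-Wu} to $g'$ with the squarefree integer $t$ now yields infinitely many sign changes in $\{a_{g'}(tn^2)\}_{n\in\N}$. But this sequence equals $\{a_g(tm^2n^2)\}_{n\in\N}$, every term of which is non-negative; a sequence of non-negative reals cannot contain infinitely many sign changes. This contradiction forces $g=0$, which is exactly the proposition.

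I do not foresee a serious obstacle: Lemma \ref{lem:unaryU} and Theorem \ref{thm: Kohnen-Lau-Wu} carry the analytic content. The only points requiring care are the bookkeeping of level and character under iterated $U_{\ell^2}$ (which the statement of Lemma \ref{lem:unaryU} already absorbs by working in arbitrary level and character) and the elementary identity $a_{g|U_{\ell^2}}(n)=a_g(n\ell^2)$, which composes cleanly across distinct primes and powers.
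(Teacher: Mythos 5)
Your argument is correct and is essentially the paper's own proof: both reduce an arbitrary nonzero coefficient $a_g(tm^2)$ to the squarefree index $t$ by applying a product of $U_{\ell^2}$ operators, invoke Lemma \ref{lem:unaryU} to stay in $U_{3/2}^{\perp}$, and then derive a contradiction between the sign changes guaranteed by Theorem \ref{thm: Kohnen-Lau-Wu} and the non-negativity of the coefficients. The only cosmetic difference is that the paper first treats the squarefree case separately before the general one, whereas you handle both at once.
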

\begin{proof}
We show the claim by proving that $a_g(n)=0$ for all $n\in\N$.  To give the idea of the argument suppose that there exists a squarefree $t\in\mathbb{N}$ such that $a_g(t)\neq0$, then by Theorem \ref{thm: Kohnen-Lau-Wu}, the sequence $\left\{ a_g(tm^{2})\right\}_{m\in\mathbb{N}}$ has sign changes. But then this contradicts the fact that $a_g(n)\geq 0$ for all positive $n$. Hence we have $a_g(n)=0$ for all squarefree $n\in\N$.

We proceed similarly to show that $a_{g}(n)=0$ for $n=tm_0^2$ with $t$ squarefree and 
\[
m_0=\prod_{j=1}^{J} \ell_j\in\N,
\]
where $\ell_j$ are (not necessarily distinct) primes.  Suppose for contradiction that $a_{g}(tm_0^2)\neq 0$.  Denoting 
\[
U_{m_0^2}:=\prod_{j=1}^J U_{\ell_j^2}
\]
and repeatedly using Lemma \ref{lem:unaryU}, there exists a character $\chi'$ for which 
\[
g|U_{m_0^2}\in U_{\frac{3}{2}}^{\perp}\!\left(4pm_0^2,\chi'\right).
\]
Thus we may apply Theorem \ref{thm: Kohnen-Lau-Wu} to $g|U_{m^2}$ to conclude that $\{a_{g|U_{m_0^2}}(tm^2):m\in\Z\}$ has infinitely many sign changes. However, since 
\[
a_{g|U_{m_0^2}}\!\left(tm^2\right)= a_g\left(tm_0^2m^2\right)\geq 0,
\] 
we obtain a contradiction. Thus $a_g(tm_0^2)=0$, as desired. 
\end{proof}
We have now established most of the ingredients necessary to prove Theorem \ref{thm:main}.  The main remaining piece is an equivalence between theta series $\vartheta_{\mathcal{O}^T}$ and $\vartheta_{\mathcal{O}'^T}$ agreeing and $\mathcal{O}^T$ and $\mathcal{O}'^T$ having the same type. 
\begin{lem}
\label{lem: equi}
Let $\mathcal{O}$ and $\mathcal{O}^{\prime}$ be maximal orders of $B_{p}$.  Then the following statements are equivalent:

\begin{enumerate}[leftmargin=*]
\item[\rm (a)] $\mathcal{O}^{T}\sim\mathcal{O}^{\prime T}$;
\item[\rm (b)] $\vartheta_{\mathcal{O}^{T}}=\vartheta_{\mathcal{O}^{\prime T}}$;
\item[\rm (c)] $Q_{\mathcal{O}^T}\sim_{\mathbb{Z}}Q_{\mathcal{O}'^T}$.
\end{enumerate}
\end{lem}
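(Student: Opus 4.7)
The plan is to prove the cycle (a) $\Rightarrow$ (c) $\Rightarrow$ (b) $\Rightarrow$ (a), with the first two arrows essentially formal and all the substance concentrated in the last.

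For (a) $\Rightarrow$ (c), suppose $c \in B_p^{\times}$ satisfies $c\mathcal{O}^T c^{-1} = \mathcal{O}^{\prime T}$. Then $\phi(x) := cxc^{-1}$ is a $\mathbb{Z}$-linear bijection $\mathcal{O}^T \to \mathcal{O}^{\prime T}$, and by multiplicativity of the reduced norm we have $\Nr(\phi(x)) = \Nr(c)\Nr(x)\Nr(c)^{-1} = \Nr(x)$. Writing $\phi$ in any choice of $\mathbb{Z}$-bases for $\mathcal{O}^T$ and $\mathcal{O}^{\prime T}$ produces an element of $\GL_3(\mathbb{Z})$ relating the two Gram matrices, which is exactly the statement $Q_{\mathcal{O}^T} \sim_{\mathbb{Z}} Q_{\mathcal{O}^{\prime T}}$. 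For (c) $\Rightarrow$ (b), a global $\mathbb{Z}$-equivalence preserves representation numbers $r_Q(n)$ at every $n$, so the theta series defined in \eqref{eqn:ThetaQdef} agree termwise.

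The substantive implication is (b) $\Rightarrow$ (a). My plan is to invoke the classical \emph{linear independence of Gross theta series}: as $\mathcal{O}$ ranges over representatives of the (finitely many) conjugacy classes of maximal orders of $B_p$, the theta series $\vartheta_{\mathcal{O}^T}$ form a linearly independent set in $M_{3/2}^{+}(p)$. Under Kohnen's Hecke-isomorphism $S_{3/2}^{+}(p) \cong S_2(p)$ together with the Eichler--Jacquet--Langlands correspondence, the cuspidal parts of the $\vartheta_{\mathcal{O}^T}$ lift to the weight-$2$ forms associated to the Brandt matrix of $B_p$, which are distinct for non-conjugate maximal orders by the Eichler basis theorem. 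Granting this, the hypothesis $\vartheta_{\mathcal{O}^T} = \vartheta_{\mathcal{O}^{\prime T}}$ forces $\mathcal{O} \sim \mathcal{O}^{\prime}$, whence $\mathcal{O}^T \sim \mathcal{O}^{\prime T}$ follows from Lemma 4 of \cite{C-G}, quoted in the remarks following Conjecture \ref{conj:main}.

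The principal obstacle is justifying the linear independence step cleanly, since it rests on classical theory outside what was developed in Section \ref{sec:prelim}. A viable alternative is to prove (b) $\Rightarrow$ (c) directly via spinor-genus theory: Schulze--Pillot's refinement of the Siegel mass formula can be used to show that two ternary forms in a common genus with identical theta series lie in a common spinor genus, after which one establishes, using the local structure of $\mathcal{O}^T$, that each spinor genus of Gross lattices of $B_p$ contains a single class. This route stays closer to the ternary-form machinery of the paper but still requires structural facts specific to Gross lattices; either approach must draw on input beyond the elementary half-integral weight framework already introduced.
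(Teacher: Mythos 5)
Your implications (a)$\Rightarrow$(c) and (c)$\Rightarrow$(b) are correct and essentially match the easy directions in the paper (which goes (a)$\Rightarrow$(b) directly by norm-invariance under conjugation, and (c)$\Rightarrow$(a) via the Gross--Lucianovic bijection \cite[Proposition 4.1]{G-L}). The problem is the step where you concentrate all the substance: the claimed \emph{linear independence of the Gross theta series} is false in general, so your (b)$\Rightarrow$(a) does not go through as stated. By Gross's work (and Waldspurger's theorem), the map from the module spanned by the types of maximal orders to $M_{3/2}^{+}(p)$ sending a type to its theta series kills the $f$-isotypic component exactly when the corresponding weight-$2$ newform $f$ of level $p$ has $L(f,1)=0$; see \cite[Sections 12--13]{Gross}. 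Whenever such a form exists (already at $p=37$, and for all large $p$), the span of the $\vartheta_{\mathcal{O}^T}$ has dimension strictly smaller than the number of types, so the theta series satisfy nontrivial linear relations. The Eichler basis theorem and Jacquet--Langlands give surjectivity statements about the span of Brandt-matrix theta series; they do not give injectivity of the type-to-theta-series map, precisely because of this $L$-value obstruction. What you actually need is only \emph{pairwise} distinctness of the $\vartheta_{\mathcal{O}^T}$ for non-conjugate orders, which is strictly weaker than linear independence and is not reachable by the representation-theoretic route you sketch. The paper obtains it from Schiemann's theorem \cite{schiemann}: a positive definite integral ternary quadratic form is determined up to $\GL_3(\Z)$-equivalence by its theta series. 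This is the one genuinely hard external input, and it is unavoidable in some form.

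Your proposed fallback is also problematic. Schulze-Pillot's refinement \cite{Schulze-Pillot} controls differences of theta series \emph{between} spinor genera (these lie in the span of unary theta functions) and differences \emph{within} a spinor genus (these are orthogonal to unary theta functions); it does not show that two classes in one spinor genus with identical theta series coincide. Moreover, the claim that each spinor genus of Gross lattices contains a single class cannot hold for large $p$: the number of spinor genera in a ternary genus is a power of $2$ that stays bounded here, while the number of classes in the genus of $\mathcal{O}^T$ equals the type number of $B_p$, which grows linearly in $p$. So that route would, if it worked, prove a statement that is false. The clean repair is to replace your (b)$\Rightarrow$(a) with the paper's two steps: (b)$\Rightarrow$(c) by Schiemann, then (c)$\Rightarrow$(a) by \cite[Proposition 4.1]{G-L} together with \cite[Lemma 4]{C-G}.
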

\begin{proof}

(a)$\Rightarrow$(b): Suppose that there is a non-zero element $c\in B_{p}$ such that $c\mathcal{O}^{T}c^{-1}=\mathcal{O}^{\prime T}$. Since 
\[
\Nr\!\left(cXc^{-1}\right)=\Nr(X)
\]
 for all $X\in B_{p}$ and non-zero $c\in B_{p}$, we conclude (b) by the definition \eqref{eqn:thetaOT} of the theta series.

(b)$\Rightarrow$(c): If $\vartheta_{\mathcal{O}^{T}}(z)=\vartheta_{\mathcal{O}^{\prime T}}(z)$, then all the coefficients of their Fourier expansions are the same.  By Schiemann \cite{schiemann}, we have $Q_{1}\sim_{\mathbb{Z}}Q_{2}$ (actually, Schiemann gave a much stronger result; roughly speaking, it only requires the first few coefficients of the Fourier expansions to be the same).  

(c)$\Rightarrow$(a): This is shown in \cite[Section 4]{G-L} by defining the associated ternary quadratic form on \cite[p. 1473]{G-L} and then showing that the map forms a bijection between orbits under $\GL_3(\Z)$ and isomorphism classes of quaternion rings over $\Z$ in \cite[Proposition 4.1]{G-L}.

\end{proof}

We are finally ready to prove our main theorem, which we state again for the convenience of the reader.
\begin{thm}
Let $\mathcal{O}$ and $\mathcal{O}^{\prime}$ be maximal orders of $B_{p}$. If $\mathcal{O}^{\prime T}$ optimally dominates $\mathcal{O}^{T}$, then $\mathcal{O}$ and $\mathcal{O}^{\prime}$ are of the same type.  Furthermore, the algorithm of Chevyrev and Galbraith halts.
\end{thm}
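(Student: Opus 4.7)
The plan is to assemble the ingredients that have already been proved into a short chain of implications. Starting from the hypothesis that $\mathcal{O}'^T$ optimally dominates $\mathcal{O}^T$, Proposition \ref{prop: 1} immediately yields the equality of theta series $\vartheta_{\mathcal{O}'^T} = \vartheta_{\mathcal{O}^T}$. I would then invoke the implication (b)$\Rightarrow$(a) of Lemma \ref{lem: equi} to conclude $\mathcal{O}^T \sim \mathcal{O}'^T$, that is, the two Gross lattices are conjugate inside $B_p$. Finally, I would appeal to Lemma 4 of \cite{C-G} (recorded in the second item of the remarks following Conjecture \ref{conj:main}), which states that the Gross lattices of two maximal orders are of the same type if and only if the maximal orders themselves are of the same type. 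Composing these three steps gives $\mathcal{O} \sim \mathcal{O}'$.

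For the furthermore clause, I would simply quote the halting criterion established by Chevyrev and Galbraith: their algorithm halts unless there exist maximal orders $\mathcal{O}$ and $\mathcal{O}'$ which are not of the same type but for which $\mathcal{O}'^T$ optimally dominates $\mathcal{O}^T$. The first half of the theorem, proved in the previous paragraph, rules out exactly this possibility, so the algorithm must halt on every input.

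The main obstacle in this theorem is not in the present argument — all the genuine content has been carried out earlier, in Lemma \ref{lem:OTdiff} (placing the difference of theta series inside $U_{3/2}^\perp(4p)$ via the fact that the maximal orders lie in a single genus plus Kohnen's Hecke-isomorphism with $S_2(p)$), in Proposition \ref{prop: 1} (the sign-change argument combining the Kohnen--Lau--Wu theorem with the $U_{\ell^2}$ stability from Lemma \ref{lem:unaryU}), and in Lemma \ref{lem: equi} (Schiemann's rigidity result for ternary theta series together with the Gross--Lucianovic dictionary). At the level of the theorem itself, no further estimates or case analyses are required; the proof is a clean three-line deduction followed by quoting Chevyrev--Galbraith's halting criterion.
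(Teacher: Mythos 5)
Your proposal matches the paper's own proof essentially verbatim: Proposition \ref{prop: 1} gives $\vartheta_{\mathcal{O}^{\prime T}}=\vartheta_{\mathcal{O}^{T}}$, the equivalence (b)$\Leftrightarrow$(a) of Lemma \ref{lem: equi} gives $\mathcal{O}^{T}\sim\mathcal{O}^{\prime T}$, and Lemma 4 of \cite{C-G} transfers this to $\mathcal{O}\sim\mathcal{O}^{\prime}$, with the halting claim then following from Chevyrev--Galbraith's criterion exactly as you describe. The argument is correct and complete.
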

\begin{proof}[Proof of Theorem \ref{thm:main}]
By Proposition \ref{prop: 1}, if $\mathcal{O}^{\prime T}$ optimally dominates $\mathcal{O}^T$, then $\vartheta_{\mathcal{O}^T}=\vartheta_{\mathcal{O}^{\prime T}}$.  Hence by the equivalence of (b) and (a) in Lemma \ref{lem: equi}, we obtain that $\mathcal{O}^T\sim \mathcal{O}^{\prime T}$.  Finally, by Lemma 4 of \cite{C-G}, we conclude that $\mathcal{O}$ and $\mathcal{O}'$ have the same type.
\end{proof}

\end{document}